\numberwithin{equation}{section}
\numberwithin{equation}{section}
\newcommand{\be}{\begin{equation}}
\newcommand{\ee}{\end{equation}}
\newcommand{\beaa}{\begin{eqnarray*}}
\newcommand{\eeaa}{\end{eqnarray*}}
\newcommand{\bea}{\begin{eqnarray}}
\newcommand{\eea}{\end{eqnarray}}
\newcommand{\lbl}{\label}
\newcommand{\bei}{\begin{itemize}}
\newcommand{\eei}{\end{itemize}}
\newcommand{\ml}{\mathcal}
\newtheorem{theorem}{ \noindent T{\footnotesize HEOREM}}
\newtheorem{lemma}{ \noindent L{\footnotesize EMMA}}[section]
\newtheorem{coro}{ \noindent C{\footnotesize OROLLARY}}
\newtheorem{question}{ \noindent Q{\footnotesize UESTION}}
\newcommand{\red}{\color{red}}
\newcommand{\E}{\mathbb{E}}
\begin{document}

\title{Asymptotic Properties of Random Restricted Partitions}
\author{Tiefeng Jiang$^{1}$
and  Ke Wang$^2$\\
University of Minnesota and HKUST }

\date{}
\maketitle

\footnotetext[1]{School of Statistics, University of Minnesota, 224 Church
Street, S. E., MN 55455, USA, jiang040@umn.edu. 
The research of Tiefeng Jiang is
supported in part by NSF Grant DMS-1916014 and DMS-1406279.}

\footnotetext[2]{Department of Mathematics,
Hong Kong University of Science and Technology, Hong Kong, China,  kewang@ust.hk. The research of Ke Wang is partially supported by Hong Kong RGC grant GRF 16301618, GRF 16308219, and ECS 26304920.
}

\begin{abstract}
\noindent We study two types of probability measures on the set of integer partitions of $n$  with at most $m$ parts. The first one chooses the random partition with a chance related to its largest part only. We then obtain the limiting distributions of all of the parts together and that of the largest part as $n$ tends to infinity while $m$ is fixed or tends to infinity. In particular, if $m$ goes to infinity not too fast, the largest part satisfies the central limit theorem. The second measure is very general. It includes the Dirichlet distribution and the uniform distribution as special cases. We derive the asymptotic distributions of the parts jointly by taking limits of $n$ and $m$ in the same manner as that in the first probability measure.
\end{abstract}

\noindent \textbf{Keywords:\/} Random partitions, asymptotic distributions, limit laws.

\noindent\textbf{AMS 2010 Subject Classification: \/} 11P82, 60C05, 60B10.

\newpage

\section{Introduction}\lbl{Introduction}

The partition $\kappa$ of a positive integer $n$ is a sequence of positive integers $k_1 \ge k_2 \ge \cdots \ge k_m$ with $m\ge 1$ whose sum is $n$. The number $m$ is called the length of $\kappa$ and $k_i$ the $i$th largest part of $\kappa$.  Let $\ml{P}_n$ denote the set of partitions of $n$ and $\ml{P}_n(m)$ the set of partitions of $n$ with length \emph{at most} $m$. Thus $1\le m \le n$ and $\ml{P}_n(n)$=$\ml{P}_n$.

The set of all partitions $\ml{P}=\cup_{n\ge 1} \ml{P}_n$ is called the macrocanonical ensemble. The partitions of $n$, $\ml{P}_n=\cup_{m=1}^n \ml{P}_{n}(m)$, is called the canonical ensemble and $\ml{P}_{n}(m)$ is the microcanonical ensemble. Integer partitions have a close relationship with statistical physics (\cite{BK37, VU37, AK46}). To be more precise, a partition $\kappa \in\ml{P}_n$ can be interpreted as an assembly of particles with total energy $n$. The number of particles is the length of $\kappa$; the number of particles with energy $l$ is equal to $\# \{ j: k_j=l\}.$ Thus $\ml{P}_n(m)$ is the set of configurations $\kappa$ with a given number of particles $m$. It is known that $\ml{P}_n(m)$ corresponds to the Bose-Einstein assembly (see section 3 in \cite{AK46} for a brief discussion). Therefore the asymptotic distribution of a probability measure on $\ml{P}_n(m)$ as $n$ tends to infinity is connected to how the total energy of the system is distributed among a given number of particles.

The most natural probability measure on the integer partitions is the uniform measure. The uniform measure on $\ml{P}_n(m)$ for $m=n$ has been well-studied (see \cite{EL, Fristedt, Pittel}). However, for the other values of $m$, to our best knowledge, the whole picture is not clear yet. In the authors' previous paper \cite{JW-LB}, as a by-product of studying the eigenvalues of Laplacian-Beltrami operator defined on symmetric polynomials, the limiting distribution of $(k_1,\ldots,k_m)$ chosen uniformly from $\ml{P}_n(m)$ is derived for fixed integer $m$. This is one of the motivations resulting in this paper. As a special case of a more general measure on $\ml{P}_n(m)$ (detailed definition given in Section \ref{sec:r2} below), we obtain the asymptotic joint distribution of $(k_1,\ldots,k_m)\in \ml{P}_n(m)$ imposed with a uniform measure for $m\to \infty$ and $m=o(n^{1/3})$. It would be an intriguing question to understand the uniform measure on $\ml{P}_n(m)$ for all value of $m$. The limiting shape of the young diagram corresponding to $\ml{P}_n(m)$ with respect to uniform measure was studied in \cite{Ver96, VK85, VY03} and \cite{Petrov} for $m=n$ and for $m=c \sqrt{n}$ where $c$ is a positive constant.

Another important class of probability measure on the integer partitions is the Plancherel measure or the more general $\alpha$-Jack measure. Plancherel measure is a special case of $\alpha$-Jack measure with $\alpha=1$. It is known the both the Plancherel measure (see \cite{BDJ, BOO, Jo,O2}, a survey by \cite{O1} and the references therein) and $\alpha$-Jack measure (see for instance \cite{BO, Fulman, sho}) have a deep connection with random matrix theory.

For a fixed constant $q\in (0,1)$, the $q$-analog of the Plancherel measure, which is called the $q$-Plancherel measure, on integer partitions has been studied in \cite{Ker92, Str08,FM12}. As explain in Section 2.2 from \cite{Str08}, it is related to a probability measure $M_q^{(n)}$ on $\mathcal P_n$, where $M_q^{(n)}$ is proportional to $q^{b(\kappa)}$ for a parameter  $b(\kappa)$ of $\kappa \in \mathcal P_n$ and can be understood as the $q$-deformation of the Plancherel measure.  Indeed, it is quite natural and common to consider the $q$-versions of existing probability measures; for example, the Macdonald measure on $\mathcal P$ can be thought of as the $q$-version of the circular $\beta$-ensemble (see \cite{FR05, Macbook}). This point of view motivates us to consider a probability measure on $\mathcal P_n(m)$ that chooses $\kappa \in \mathcal P_n(m)$ proportionally to $q^{\sigma(\kappa)}$, where $\sigma(\kappa)$ is a function of $\kappa=(k_1,\ldots,k_m)$. In this paper, we set $\sigma(\kappa)=k_1$, the largest part of $\kappa$, and study the asymptotic behavior of the parts of $\kappa$ as $n$ tends to infinity. This probability measure on the microcanonical ensemble $\mathcal P_n(m)$ can also be viewed as an analog of a probability measure $\mu(\cdot)$ defined on the macrocanonical ensemble $\mathcal P$, introduced in \cite{Ver96}, where $\mu(\lambda) = c q^{|\lambda|}$ for any $\lambda \in \mathcal P$ and $|\lambda|$ is the sum of its parts.

In this paper, we consider two new probability measures on $\ml{P}_n(m)$ assuming either $m$ is fixed or $m$ tends to infinity with $n$. We investigate the asymptotic joint distributions of $(k_1,\ldots,k_m)$ as $n$ tends to infinity. We first introduce the probability measures on $\ml{P}_n(m)$ and present the main results in Section \ref{sec:r1} and \ref{sec:r2}. The proofs are given in the remaining of the paper.

\subsection{Restricted Geometric Distribution}\label{sec:r1}
The first type of random partitions on $\ml{P}_n(m)$ is defined as follows: for $\kappa=(k_1,\ldots,k_m) \in \ml{P}_n(m)$, consider the probability measure
\bea\lbl{eq:another}
P(\kappa) = c\cdot q^{k_1}
\eea
where $0<q<1$ and $c=c_{n,m}$ is the normalizing constant that $\sum_{\kappa \in \ml{P}_n(m)} P(\kappa) =1$. We call this probability measure the \emph{restricted geometric distribution}. This probability measure favors the partitions $\kappa$ with the smallest possible largest part $k_1$. Thus we concern the fluctuation of $k_1$ around $\lceil \frac{n}{m} \rceil$. The motivation to work on the measure  in \eqref{eq:another} has been stated in the Introduction.

When $m$ is a fixed integer, the main result is the following.

\begin{theorem}\lbl{another_weight} For given $m\geq 2$, let $\kappa=(k_1,\ldots,k_m) \in \ml{P}_n(m)$ be chosen with  probability $P(\kappa)$ as in (\ref{eq:another}).  For a subsequence $n\equiv j_0$ (mod $m$), define $j=j_0$ if $1\le j_0 \le m-1$ and $j=m$ if $j_0=0$. Then as $n\to\infty$, we have $\big(k_1 -\lceil \frac{n}{m} \rceil, \ldots, k_m-\lceil \frac{n}{m} \rceil )$ converges to a discrete random vector with pmf
\beaa
f(l_1, \cdots, l_m)=\frac{q^{l_1}}{\sum_{l=0}^{\infty} q^{l }\cdot |\ml{P}_{ m(l+1)-j}(m-1)|}
\eeaa
for all integers $(l_1, \cdots, l_m)$ with $l_1 \ge 0$, $l_1\geq \cdots \geq l_m$ and $\sum_{i=1}^ml_i=j-m.$
\end{theorem}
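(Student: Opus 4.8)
The plan is to pass to the fluctuation variables $l_i := k_i-\lceil n/m\rceil$ and reduce the statement to a pointwise convergence of probability mass functions together with convergence of the normalizing constant. Write $a:=\lceil n/m\rceil$. Along the subsequence $n\equiv j_0\ (\mathrm{mod}\ m)$ one checks the elementary identity $ma-n=m-j$, so that every $\kappa=(k_1,\dots,k_m)\in\ml{P}_n(m)$ (with parts padded by zeros when the length is $<m$) satisfies $\sum_{i=1}^m(k_i-a)=n-ma=j-m$, which is exactly why the limiting pmf is supported on the hyperplane $\sum_i l_i=j-m$. Since $k_1\ge n/m$ one has $l_1\ge 0$, and since $k_m\ge 0$ one has $l_m\ge -a$; hence under $P$ the shifted vector $(l_1,\dots,l_m)$ takes values in the finite set $A_n:=\{(l_1,\dots,l_m):l_1\ge l_2\ge\cdots\ge l_m\ge -a,\ l_1\ge 0,\ \sum_i l_i=j-m\}$, and because $q^{k_1}=q^a q^{l_1}$ the factor $q^a$ cancels against the normalization, giving $P\big((k_i-a)_i=(l_i)_i\big)=q^{l_1}/S_n$ with $S_n:=\sum_{(l_1,\dots,l_m)\in A_n}q^{l_1}$.

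The core step is to evaluate $S_n$ by summing first over the value $l_1=l\ge 0$ and then over the remaining coordinates. For fixed $l$ and fixed $a$, the map $b_i:=l-l_i$ ($2\le i\le m$) is a bijection between the tuples $(l_2,\dots,l_m)$ with $l\ge l_2\ge\cdots\ge l_m\ge -a$ and $\sum_{i=2}^m l_i=j-m-l$ on the one hand, and the partitions $0\le b_2\le\cdots\le b_m\le l+a$ of $(m-1)l-(j-m-l)=m(l+1)-j$ into at most $m-1$ parts on the other. As $n\to\infty$ along the subsequence, $a\to\infty$, the constraint $b_m\le l+a$ becomes inactive, and the count increases monotonically to $|\ml{P}_{m(l+1)-j}(m-1)|$ (note $m(l+1)-j\ge 0$ since $j\le m$). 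Since $|\ml{P}_N(m-1)|\le(N+1)^{m-1}$ grows only polynomially in $N$ while $q^l$ decays geometrically, the series $S_\infty:=\sum_{l=0}^\infty q^l\,|\ml{P}_{m(l+1)-j}(m-1)|$ converges, and dominated convergence (the $l$-th term being bounded by $q^l|\ml{P}_{m(l+1)-j}(m-1)|$) yields $S_n\to S_\infty$.

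Finally I would assemble the pieces. Fix any integer vector $(l_1,\dots,l_m)$ with $l_1\ge 0$, $l_1\ge\cdots\ge l_m$ and $\sum_i l_i=j-m$; as soon as $a\ge -l_m$ this vector lies in $A_n$, so $P\big((k_i-a)_i=(l_i)_i\big)=q^{l_1}/S_n\to q^{l_1}/S_\infty=f(l_1,\dots,l_m)$. Summing the same expansion over all admissible tuples shows $\sum f=S_\infty/S_\infty=1$, so $f$ is a genuine pmf on the countable admissible index set; pointwise convergence of the pmf together with the limit being a probability distribution gives convergence in distribution (the discrete form of Scheff\'e's lemma). I expect the only genuinely technical point to be the interchange of limit and summation defining $S_\infty$, which is handled by the polynomial bound on restricted partition counts; everything else is bookkeeping with the residue of $n$ modulo $m$, and that residue dependence is precisely what forces the statement to be phrased along subsequences.
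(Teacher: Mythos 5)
Your proposal is correct and follows essentially the same route as the paper: you pass to the fluctuations $l_i=k_i-\lceil n/m\rceil$, use the transform $b_i=l_1-l_i$ to identify the count of admissible $(l_2,\ldots,l_m)$ with $|\ml{P}_{m(l_1+1)-j}(m-1)|$ (the content of the paper's Lemma \ref{lantern_wind}), and show the normalizer converges to $\sum_{l\ge 0}q^l|\ml{P}_{m(l+1)-j}(m-1)|$ before concluding by pointwise pmf convergence. The only (harmless) difference is technical: you justify the limit of the normalizer by dominated convergence with the crude bound $|\ml{P}_N(m-1)|\le (N+1)^{m-1}$, whereas the paper truncates at $M_n=[\frac{1}{m-1}(\frac{n}{m}-m)]$ and invokes the Erd\H{o}s--Lehner asymptotic \eqref{eq:size} together with convergence of $\sum_s s^{m-2}q^s$; your version is, if anything, slightly more elementary.
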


From Theorem \ref{another_weight}, we immediately obtain the limiting distribution of the largest part $k_1$, which fluctuates around  its smallest possible value $\lceil \frac{n}{m}\rceil$. As a consequence, the conditional distribution of $(k_2,\ldots,k_m)$ given the largest part $k_1$ is asymptotically a uniform distribution.
\begin{coro}\lbl{cor:first}  Given $m\geq 2$, let $\kappa=(k_1,\ldots,k_m) \in \ml{P}_n(m)$ be chosen with  probability $P(\kappa)$ as in (\ref{eq:another}).   For a subsequence $n\equiv j_0$ (mod $m$), define $j=j_0$ if $1\le j_0 \le m-1$ and $j=m$ if $j_0=0$. Then as $n \to\infty$, we have $k_1 -\lceil \frac{n}{m}\rceil$ converges to a discrete random variable with pmf
\beaa
f(l)=\frac{q^{l}\cdot |\ml{P}_{ ml+m-j}(m-1)|}{\sum_{l=0}^{\infty} q^{l }\cdot |\ml{P}_{ ml+m-j}(m-1)|},~~ l\geq 0.
\eeaa
Furthermore, the conditional distribution
of $(k_2 -\lceil \frac{n}{m}\rceil, \ldots, k_m-\lceil \frac{n}{m}\rceil)$ given $k_1=\lceil \frac{n}{m}\rceil+l_1$ $(l_1 \ge 0)$ is asymptotically a uniform distribution on the set
$\big\{(l_2,\ldots,l_m)\in \mathbb{Z}^{m-1};\, l_1 \ge l_2 \ge \ldots \ge l_m \ \mbox{and}\  l_1 + \sum_{i=2}^m l_i = j-m  \big\}.$
\end{coro}
We present the proofs of Theorem \ref{another_weight} and Corollary \ref{cor:first} in Section \ref{sec:anotherfix}.

When $m$ tends to infinity with $n$ and $m=o(n^{1/3})$, we consider the limiting distribution of the largest part $k_1$. The main result is that with proper normalization, the largest part $k_1$ converges to a normal distribution.

\begin{theorem}\lbl{surprise_result} 
Given $q\in (0,1)$, let $\kappa=(k_1,\ldots,k_m) \in \ml{P}_n(m)$ be chosen with  probability $P(\kappa)$ as in (\ref{eq:another}). Set $\lambda =-\log q>0.$ If $m=m_n\to\infty$ with  $m=o(n^{1/3})$, then $\frac{1}{\sqrt{m}}(k_1  -\lceil \frac{n}{m}\rceil - \gamma m)$
converges weakly to $N(0, \sigma^2)$ as $n\to\infty$, where
\beaa
\gamma=\frac{1}{\lambda^{2}}\int_0^{\lambda}\frac{t}{e^t-1}\,dt\ \ \mbox{and}\ \ \sigma^2=\frac{2}{\lambda^{3}}\int_0^{\lambda}\frac{t}{e^t-1}\,dt- \frac{1}{\lambda(e^{\lambda}-1)}>0.
\eeaa
\end{theorem}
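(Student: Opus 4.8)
The plan is to write $k_1 = \lceil n/m\rceil + S$ where $S \geq 0$ is the fluctuation, and compute the moment generating function (or characteristic function) of $S$ via the normalizing constant $c_{n,m}$. The key combinatorial identity is that summing $q^{k_1}$ over $\kappa \in \ml{P}_n(m)$ reduces, after subtracting the base value $\lceil n/m\rceil$ from each part, to a weighted sum involving $|\ml{P}_{N}(m-1)|$ for various $N$; concretely $P(k_1 = \lceil n/m\rceil + \ell)$ is proportional to $q^{\ell}\,|\ml{P}_{m\ell + r}(m-1)|$ for an explicit residue $r = r(n,m)$ (this is the $m\to\infty$ analogue of the pmf in Corollary \ref{cor:first}, except now $r$ can grow with $n$). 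So the whole problem becomes: understand the asymptotics of $|\ml{P}_{N}(m-1)|$ when both $N$ and $m$ grow, in the regime relevant here where $N \asymp m\ell$ with $\ell$ of order $m$, i.e. $N \asymp m^2$.

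The second step is to obtain a sufficiently precise asymptotic expansion for $p_{m-1}(N) := |\ml{P}_N(m-1)|$, the number of partitions of $N$ into at most $m-1$ parts, in the range $N \asymp m^2$ with $m = o(n^{1/3})$. One clean route is the generating function $\sum_N p_{m-1}(N) x^N = \prod_{i=1}^{m-1}(1-x^i)^{-1}$ together with a saddle-point / Cauchy-integral analysis; writing $x = e^{-s}$, the exponent is $-\sum_{i=1}^{m-1}\log(1-e^{-is}) + Ns$, and with $N \asymp m^2$ the saddle sits at $s \asymp 1/m$, so $is$ ranges over $[0, m s] \approx [0, \text{const}]$ and the sum $\sum_{i}\log(1-e^{-is})$ is well approximated by $m\int_0^{\lambda_*}\log(1-e^{-t})\,dt$ for the appropriate upper limit. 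This is exactly where the integrals $\int_0^\lambda \frac{t}{e^t-1}\,dt$ appearing in $\gamma$ and $\sigma^2$ come from: differentiating $\int_0^{u}\log(1-e^{-t})\,dt$ twice in the saddle parameter produces $\int_0^u \frac{t\,dt}{e^t-1}$ type quantities. The upshot should be a local expansion of the form $\log p_{m-1}(m\ell + r) = m\,\phi(\ell/m) + (\text{lower order})$ for a smooth strictly concave function $\phi$, with error terms controlled uniformly because $m = o(n^{1/3})$ keeps the relevant parameter ranges bounded and the corrections small.

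Plugging this back in, $\log P(k_1 = \lceil n/m\rceil + \ell)$ becomes, up to an additive constant, $-\lambda \ell + m\,\phi(\ell/m) + o(1)$ uniformly for $\ell$ in a window of width $O(\sqrt m)$ around its maximizer. Maximizing $-\lambda u + \phi(u)$ in $u = \ell/m$ gives $\phi'(u^*) = \lambda$; one identifies $u^* = \gamma$ from the explicit form of $\phi$, and a second-order Taylor expansion around $u^*$ gives a Gaussian profile in $\ell$ with variance $m/(-\phi''(\gamma)) = m\sigma^2$, so that $(S - \gamma m)/\sqrt m \Rightarrow N(0,\sigma^2)$. Finally one checks $\sigma^2 > 0$, which amounts to strict concavity of $\phi$ (the integrand $t/(e^t-1)$ being positive makes this transparent from the stated formula).

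The main obstacle is the second step: getting the asymptotics of $p_{m-1}(N)$ for $N \asymp m^2$ with uniformity good enough — in particular error terms that are $o(1)$ after multiplying through by $m$ and that hold uniformly over the $O(\sqrt m)$-window of $\ell$ — is the technical heart of the argument, and the condition $m = o(n^{1/3})$ is almost certainly exactly what is needed to make those error terms vanish (the residue $r$ depends on $n$ through $n - m\lceil n/m\rceil$, which can be as large as $\Theta(m)$, so one needs the expansion to be insensitive to shifting $N$ by $O(m)$). Everything after that is a local central limit theorem packaged from the saddle-point estimate.
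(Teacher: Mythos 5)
Your overall strategy is the same as the paper's: reduce via the combinatorial identity to $P(k_1=\lceil n/m\rceil+\ell)\propto q^{\ell}\,|\ml{P}_{m(\ell+1)-j}(m-1)|$, obtain asymptotics of the restricted partition numbers in the regime $N\asymp m^2$ with about $m$ parts, and then run a Laplace/local-CLT analysis of $-\lambda u+\phi(u)$ around its maximizer; your $\phi$ is exactly the paper's $\psi$ in the variable $t=u^{-1/2}$ (Lemma \ref{London}), and your identification of $\gamma$ and of the variance $m/|\phi''(\gamma)|$ matches. The difference is that the step you flag as the ``main obstacle'' --- uniform asymptotics of $|\ml{P}_N(m-1)|$ for $N\asymp m^2$ --- is not re-derived in the paper at all: it is exactly Szekeres' formula (\ref{marriage}), which holds uniformly for $k\ge N^{1/6}$, a condition automatic here since $m\gg (m^2)^{1/6}$. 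So your saddle-point analysis of $\prod_{i<m}(1-x^i)^{-1}$ is a viable but unnecessary detour, and as written it is only a sketch; if you keep it you must actually prove the local expansion with errors that are $o(1)$ after multiplication by nothing (the errors sit inside $\log p_{m-1}$, and you need them uniform over the window), which is a serious piece of work.

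Two further points where your outline is thinner than what is actually needed. First, a Gaussian profile ``uniformly in a window of width $O(\sqrt m)$'' does not by itself give weak convergence: you must show that the total mass outside the window is negligible relative to the window's contribution. The paper spends most of its effort there --- crude tails via Hardy--Ramanujan bounds, refined tails via the monotonicity of $\psi$ away from $t_0$ (Lemma \ref{London}), and a careful sum-versus-integral comparison with a derivative bound on the Gaussian summand --- and none of this is automatic from strict concavity alone, because the summand also carries the prefactor $f(u)/(m\ell-j)$ and the identity for the counts is only exact on part of the range. Second, your explanation of the hypothesis $m=o(n^{1/3})$ is not the right one: Szekeres' formula needs no relation between $m$ and $n$, and the residue $r=m-j$ shift is harmless. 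The condition is needed because the exact identity of Lemma \ref{lantern_wind} holds only for $\ell\le M_n=[\frac{1}{m-1}(\frac nm-m)]\approx n/m^2$ (beyond that the constraint $k_2\le k_1$ bites and one only has an upper bound), and the fluctuation scale is $\ell\approx \gamma m+O(\sqrt m\log m)$; so one needs $M_n/m\to\infty$, i.e.\ $m=o(n^{1/3})$, to place the Gaussian window inside the exact-identity range and to kill the contribution from $\ell>M_n$ (cf.\ (\ref{beckey}) and Step 1 of the paper's proof). With these repairs your plan would reproduce the paper's argument; as it stands, the technical core is acknowledged but not supplied.
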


The proof of Theorem \ref{surprise_result} is analytic and quite involved. We use the Laplace method to estimate the normalization constant $c=c_{n,m}$ in \eqref{eq:another}. The same analysis is applied to obtain the asymptotic distribution of the largest part $k_1$. Thanks to the Szekeres formula (see \eqref{marriage}) for the number of restricted partitions,  we first approximate $c_{n,m}$ with an integral
$$c_{n,m} \approx C(m)\cdot\int \exp(m\psi(t))\, dt$$
for some function $\psi(t)$ that has a global maximum at $t_0>0$. Thus
$$\psi(t) \approx \psi(t_0)  -\frac{1}{2} |\psi''(t_0)| t^2$$ and
\bea\lbl{eq:nor}
c_{n,m}
&\approx& C(m) e^{m\psi(t_0)}\cdot\int \exp\left(-\frac{1}{2}m |\psi''(t_0)|t^2 \right)\, dt.
\eea
The most significant contribution in the integral comes from the $t$ close to $t_0$. Indeed, the integral in \eqref{eq:nor} is reduced to a Gaussian integral as $n\to \infty$. We prove Theorem \ref{surprise_result} in Section \ref{sec:anotherinf}.

It remains to consider the conditional distribution of $(k_2,\ldots,k_m)$ given the largest part $k_1$. It is convenient to work with $k_i=\lceil \frac{n}{m}\rceil + l_i$ for $1\le i \le m$. In view of Theorem \ref{surprise_result}, let $k_1 = \lceil \frac{n}{m}\rceil + l_1$ with $l_1=\gamma m + C\cdot\sqrt{m}$. Given $l_1$,  $(l_2,\ldots,l_m)$ has a uniform distribution on the set $\{(l_2,\ldots,l_m)\in \mathbb{Z}^{m-1};\, l_1 \ge l_2 \ge \ldots \ge l_m \ \mbox{and}\  l_1 + \sum_{i=2}^m l_i = j-m \}$. We consider a linear transform
$(j_2,\ldots,j_m)=(l_1-l_2,\ldots,l_1-l_m)$. Since uniform distribution is preserved under linear transformations, $(j_2,\ldots,j_m)$ has the uniform distribution on the set
$\{(j_2,\ldots,j_m)\in \mathbb{N}^{m-1};\, j_m \ge \ldots \ge j_3\ge j_2 \ \mbox{and}\  \sum_{i=2}^m j_i = ml_1+m-j \}$. In general, the problem is related to understanding the uniform distribution on the set
\beaa
\Big\{(\lambda_2,\ldots,\lambda_m) \in \mathbb{N}^{m-1};\, \lambda_2 \ge \ldots \ge \lambda_m \ge 0 \ \mbox{and}\ \sum_{i=2}^m \lambda_i = m l_1 \Big\}.
\eeaa
To our best knowledge, it is not even clear what the limiting joint distribution of a partition chosen uniformly from $\ml{P}_{m^2}(\gamma m)$ is as $m$ tends to infinity. We raise the following question for future projects.

\begin{question}\lbl{qu1}
Given $q\in (0,1)$, let $\kappa=(k_1,\ldots,k_m) \in \ml{P}_n(m)$ be chosen with  probability $P(\kappa)$ as in (\ref{eq:another}). Assume $m$ tends to infinity with $n$ and $m=o(n^{1/3})$. Determine the asymptotic joint distribution of $(k_2,\ldots,k_m)$ given $k_1$. Furthermore, what is the limiting distribution of $(k_1,k_2, \ldots,k_m)$ as $n$ tends to infinity?
\end{question}

We have considered the limiting distribution of $\kappa \in \ml{P}_n(m)$ chosen as in (\ref{eq:another}) for $m$ fixed as well as $m=o(n^{1/3})$. It is also interesting to investigate this probability measure for other ranges of $m$.
\begin{question}\lbl{qu1}
Given $q\in (0,1)$, let $\kappa=(k_1,\ldots,k_m) \in \ml{P}_n(m)$ be chosen with  probability $P(\kappa)$ as in (\ref{eq:another}). Identify the asymptotic distribution of $\kappa$ for the entire range $1\le m \le n$.
\end{question}

\subsection{A Generalized Distribution}\label{sec:r2}

Next we consider a probability measure on $\ml{P}_n(m)$ by choosing a partition $\kappa=(k_1, \ldots, k_m) \vdash n$ with chance
\bea\lbl{eq:general}
P_n (\kappa) = c\cdot f\Big(\frac{k_1}{n},\ldots,\frac{k_m}{n}\Big)
\eea
where $c=c_{n,m}=\big({\sum_{(k_1,\ldots,k_m)\in\ml{P}_n(m)}} f(\frac{k_1}{n},\ldots,\frac{k_m}{n})\big)^{-1}$ is the normalizing constant and  $f(x_1,\ldots,x_m)$ is defined on $\overline{\nabla}_{m-1}$, the closure of $\nabla_{m-1}$. Here $\nabla_{m-1}$ is the ordered $(m-1)$-dimensional simplex defined as
\beaa
\nabla_{m-1}: = \Big\{ (y_1,\ldots,y_m) \in [0,1]^m;  y_1> y_2 > \ldots > y_{m-1} > y_m \text{ and }  y_m=1-\sum_{i=1}^{m-1}y_i\Big\}.
\eeaa
We assume $f$ is a probability density function on $\nabla_{m-1}$ and is either bounded continuous or Lipschitz on $\overline{\nabla}_{m-1}$.

When $m$ is a fixed integer, we study the limiting joint distribution of the parts of $\kappa$ chosen as in \eqref{eq:general}. The main result is the following.
\begin{theorem}\lbl{thm:general}
Let $m \ge 2$ be a fixed integer. Assume $\kappa=(k_1,\ldots, k_m) \in \ml{P}_n(m)$ is chosen as in \eqref{eq:general}, where $f$ is a probability density function on ${\nabla}_{m-1}$ and $f$ is bounded continuous on $\overline{\nabla}_{m-1}$. Then $(\frac{k_1}{n},\ldots, \frac{k_m}{n})$ converges weakly to a probability measure $\mu$ with density function $f(y_1,\ldots,y_m)$ defined on $\nabla_{m-1}$.
\end{theorem}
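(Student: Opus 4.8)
The plan is to realize $\ml{P}_n(m)$ as the set of lattice points of mesh $1/n$ inside one fixed polytope, and to read the weights $f(k_1/n,\dots,k_m/n)$ as a Riemann sum against Lebesgue measure. For $\kappa=(k_1,\dots,k_m)\in\ml{P}_n(m)$ write $y_\kappa:=(k_1/n,\dots,k_m/n)\in\overline{\nabla}_{m-1}$. By the portmanteau theorem it suffices to prove
\[
\E\, g(y_\kappa)\ \longrightarrow\ \int_{\nabla_{m-1}} g\,f
\qquad\text{for every bounded continuous }g\text{ on }\overline{\nabla}_{m-1};
\]
here $f$, being Lipschitz on the compact set $\overline{\nabla}_{m-1}$, is itself bounded and continuous there, so $gf$ is too. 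Since $P_n$ is given by $(\ref{eq:general})$, the left-hand side is the ratio
\[
\E\, g(y_\kappa)=\frac{\sum_{\kappa\in\ml{P}_n(m)} g(y_\kappa)\,f(y_\kappa)}{\sum_{\kappa\in\ml{P}_n(m)} f(y_\kappa)},
\]
so it is enough to control sums $S_n(h):=\sum_{\kappa\in\ml{P}_n(m)} h(y_\kappa)$ for $h$ continuous on $\overline{\nabla}_{m-1}$, applied to $h=gf$ and $h=f$.

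Next I would identify a partition $k_1\ge\cdots\ge k_m\ge 0$ of $n$ (padding with zeros if it has fewer than $m$ parts) with the integer vector $(k_1,\dots,k_{m-1})$, which determines $k_m=n-\sum_{i<m}k_i$. As $\kappa$ runs over $\ml{P}_n(m)$ this vector runs exactly over the integer points of the dilate $nK$, where $K\subset\RR^{m-1}$ is the compact, full-dimensional polytope
\[
K=\Big\{(z_1,\dots,z_{m-1})\in\RR^{m-1}:\ z_1\ge z_2\ge\cdots\ge z_{m-1}\ge 1-\textstyle\sum_{i<m}z_i\ge 0\Big\},
\]
the image of $\overline{\nabla}_{m-1}$ under the projection dropping the last coordinate. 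I would then establish the elementary equidistribution statement: for any $h$ continuous on $\overline{\nabla}_{m-1}$,
\[
\frac{1}{n^{m-1}}\,S_n(h)\ \longrightarrow\ \int_K \widetilde h(z)\,dz,\qquad \widetilde h(z):=h\big(z,\,1-\textstyle\sum_{i<m}z_i\big),
\]
and in particular $n^{-(m-1)}|\ml{P}_n(m)|\to \lambda(K)>0$, with $\lambda$ Lebesgue measure on $\RR^{m-1}$. This is the standard fact that Riemann sums over $\tfrac1n\mathbb Z^{m-1}\cap K$ converge for a Jordan-measurable domain and a uniformly continuous integrand; since $K$ is a polytope one can prove it directly by slicing $K$ into cubes of side $\varepsilon$, replacing $\widetilde h$ on each cube by its value at a corner (error bounded by the modulus of continuity of $h$), and using the crude estimate that the number of points of $\tfrac1n\mathbb Z^{m-1}$ within distance $\varepsilon$ of $\partial K$ is $O(\varepsilon\, n^{m-1})$, which holds because $\partial K$ is a finite union of pieces of hyperplanes.

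Finally, combining the two ingredients and dividing numerator and denominator by $n^{m-1}$,
\[
\E\, g(y_\kappa)\ \longrightarrow\ \frac{\int_K \widetilde{gf}\,dz}{\int_K \widetilde f\,dz}
=\frac{\int_{\nabla_{m-1}} g\,f}{\int_{\nabla_{m-1}} f}=\int_{\nabla_{m-1}} g\,f,
\]
the denominator being a fixed positive number since $f$ is a probability density on $\nabla_{m-1}$. (The constant Jacobian relating $(m-1)$-dimensional area on the hyperplane $\{\sum y_i=1\}$ to $dz$ appears identically in numerator and denominator and cancels, so the precise reference measure making $f$ a density is irrelevant.) This is precisely weak convergence of $(k_1/n,\dots,k_m/n)$ to the measure $\mu$ with density $f$, which is the theorem. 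The only nontrivial step is the equidistribution lemma, and within it the only delicate point is the $O(\varepsilon\, n^{m-1})$ bound on lattice points near $\partial K$; everything else is routine. Alternatively, one may simply quote that a bounded Jordan-measurable set $K$ satisfies $\#(\tfrac1n\mathbb Z^{m-1}\cap K)=n^{m-1}(\lambda(K)+o(1))$ together with the Riemann integrability of continuous functions.
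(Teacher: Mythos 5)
Your proposal is correct for fixed $m$, and it rests on the same underlying idea as the paper's proof---approximating the weighted sums over $\ml{P}_n(m)$ by integrals over the ordered simplex---but you implement the key step in a genuinely softer way. You identify $\ml{P}_n(m)$ with $nK\cap\mathbb{Z}^{m-1}$ for a fixed full-dimensional polytope $K$ and then invoke the standard equidistribution fact that Riemann sums of a continuous function over $\tfrac1n\mathbb{Z}^{m-1}\cap K$ converge to the integral, the boundary contributing only $O(\varepsilon\,n^{m-1})$ lattice points; in particular you never need the Lipschitz hypothesis (continuity of $f$ on $\overline{\nabla}_{m-1}$ suffices) nor the Erd\H{o}s--Lehner count. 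The paper instead runs the comparison quantitatively: it first discards partitions with repeated or zero parts via the bijection of $\mathcal{R}_n(m)$ with $\ml{P}_{n-\binom{m+1}{2}}(m)$ and \eqref{eq:size}, and then bounds the cube-by-cube discrepancy explicitly, splitting it into a Lipschitz term $\mathcal{S}_1$ and an indicator-mismatch term $\mathcal{S}_2$ controlled by counting the boundary configurations $E_j$, $\mathcal{D}_{n,m,1}$, $\mathcal{D}_{n,m,2}$. What that extra work buys is the error bound $O\big(\tfrac{\sqrt m}{n}+\tfrac{(1+m/n)^m}{n}\big)$ of \eqref{eq:diff}--\eqref{eq:compare}, uniform over test functions with $\|\varphi\|_L\le 1$ and explicit in $m$, which is exactly what is fed into Theorem \ref{thm:infinity} to prove Theorem \ref{thm:general-infinity} for $m=o(n^{1/3})$; your equidistribution lemma has an $o(1)$ that is not uniform in $m$ (the number of facets, the implicit constants, and $\lambda(K)$ all degenerate as $m$ grows), so it proves Theorem \ref{thm:general} as stated but would not extend to the growing-$m$ regime. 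One small bookkeeping point: in the paper's normalization $f$ is a density with respect to $dy_1\cdots dy_{m-1}$, i.e.\ with respect to $dz$ on $K$, so $\int_K\widetilde f\,dz=1$ and your final ratio is literally $\int_{\nabla_{m-1}}gf$; your observation that any Jacobian cancels in the ratio covers this in any case.
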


From Theorem \ref{thm:general}, we can immediately obtain the limiting convergence to several familiar distributions.
We say $(X_1, \ldots, X_m)$ has the \emph{symmetric Dirichlet distribution} with parameter $\alpha>0$, denoted by $(X_1,\ldots,X_m) \sim \text{Dir}(\alpha)$, if the distribution has pdf
$$\frac{\Gamma(m\alpha)}{\Gamma(\alpha)^m} x_1^{\alpha-1} \cdots x_m^{\alpha-1}$$ on the $(m-1)$-dimensional simplex
$$W_{m-1}:=\Big\{ (x_1,\ldots,x_{m-1},x_m) \in [0,1]^m;\, \sum_{i=1}^m x_i =1\Big\}$$ and zero elsewhere.

\begin{coro}\label{cor:dir}
Let $m \ge 2$ be a fixed integer. Assume $\kappa=(k_1,\ldots, k_m) \in \ml{P}_n(m)$ is chosen as in \eqref{eq:general} with  $f(x_1,\ldots,x_m)= c\cdot x_1^{\alpha-1} \cdots x_m^{\alpha-1}$ for  some $\alpha\ge 1$ and $1/c=\int_{\nabla_{m-1}} x_1^{\alpha-1} \cdots x_m^{\alpha-1} \,dx_1\ldots dx_{m-1}$, then
$$\Big(\frac{k_1}{n},\ldots, \frac{k_m}{n}\Big) \to (X_{(1)}, \ldots, X_{(m)})$$
where $(X_{(1)}, \ldots, X_{(m)})$ is the decreasing order statistics of $(X_1,\ldots,X_m) \sim \text{Dir}(\alpha).$
\end{coro}

\begin{coro}\label{cor:sphere}
Let $m \ge 2$ be a fixed integer. Assume $\kappa=(k_1,\ldots, k_m) \in \ml{P}_n(m)$ is chosen as in \eqref{eq:general} with  $f(x_1,\ldots,x_m)= c\cdot x_1^{\alpha-1} \cdots x_m^{\alpha-1}$ for some $\alpha \ge 1$ and $1/c=\int_{\nabla_{m-1}} x_1^{\alpha-1} \cdots x_m^{\alpha-1} \,dx_1\ldots dx_{m-1}$, then
$$\left(\Big(\frac{k_1}{n}\Big)^{\alpha},\ldots, \Big(\frac{k_m}{n}\Big)^{\alpha}\right) \to (Y_1, \ldots, Y_m)$$
as $n\to \infty$, where $(Y_1, \ldots, Y_m)$ has the uniform distribution on $$\Big\{(y_1,\ldots,y_m)\in[0,1]^m; \sum_{i=1}^m y_i^{1/{\alpha}} = 1, y_1 \ge\ldots \ge y_m \Big\},$$ or equivalently, $(Y_1, \ldots, Y_m)$ is the decreasing order statistics of the uniform distribution on $\{(y_1,\ldots,y_m)\in[0,1]^m; \sum_{i=1}^m y_i^{1/{\alpha}} = 1\}$.
\end{coro}

For the special case $\alpha =1$, that is, $\kappa$ is chosen uniformly from $\ml{P}_n(m)$, the conclusion of Corollary \ref{cor:sphere} is first proved in \cite{JW-LB}. The proofs of Theorem \ref{thm:general}, Corollary \ref{cor:dir} and Corollary \ref{cor:sphere} are included in Section \ref{sec:generalfix}.

When $m$ grows with $n$, we establish the limiting distribution of random restricted partitions in $\ml{P}_n(m)$. Define
\beaa
\nabla = \Big\{ (y_1, y_2, \cdots) \in [0,1]^\infty;\  y_1\geq  y_2 \geq \cdots\ \mbox{and} \sum_{i=1}^{\infty}y_i \le 1 \Big\}.
\eeaa
 Note that $\nabla_{m-1}$ can be viewed as subsets of
\beaa
\nabla_{\infty}= \Big\{(y_1,y_2,\ldots) \in [0,1]^{\infty};\  y_1\geq  y_2 \geq \cdots\ \mbox{and} \sum_{i=1}^{\infty}y_i = 1 \Big\}
\eeaa
by natural embedding. And $\nabla$ is the closure of $\nabla_{\infty}$ in $[0,1]^{\infty}$ with topology inherited from $[0,1]^{\infty}$. By Tychonoff's theorem, $\nabla_{m-1}$ and $\nabla$ are compact. Furthermore, both $\nabla_{m-1}$ and $\nabla$ are compact Polish space and thus any probability measure on $\nabla_{m-1}$ is tight. Therefore, for probability measures $\{\mu_n\}_{n\ge 1}$ and $\mu$ on $\nabla$, $\mu_n$ converges to $\mu$ weakly if all the finite-dimensional distribution of $\mu_n$ converges to the corresponding finite-dimensional distribution of $\mu$.

\begin{theorem}\label{thm:general-infinity}
Let $m=o(n^{1/3}) \to \infty$ as $n\to\infty.$ Assume $\kappa=(k_1,\ldots, k_m) \in \ml{P}_n(m)$ is chosen with probability as in \eqref{eq:general} where $f$ is a probability density function on $\nabla_{m-1}$ and is Lipschitz on $\overline{\nabla}_{m-1}$. Furthermore, assume the Lipschitz constant $\|f\|_{Lip} \le K$ for an absolute constant $K>0$. Let $(X_{m,1}, \cdots, X_{m,m})$ have  density function 
$f(y_1,\cdots,y_m)$ defined on $\nabla_{m-1}$. If $(X_{m,1}, \cdots, X_{m,m})$ converges weakly to $X$ defined on $\nabla$ as $n\to \infty$, then $(\frac{k_1}{n},\cdots, \frac{k_m}{n})$ converges weakly to $X$ as $n\to \infty$.
\end{theorem}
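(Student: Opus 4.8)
The plan is to compare the law $\nu_n$ of $\big(\tfrac{k_1}{n},\dots,\tfrac{k_m}{n}\big)$, viewed as a probability measure on $\nabla$, with the law $\mu_m$ of $(X_{m,1},\dots,X_{m,m})$ (which has density $f$ on $\nabla_{m-1}$), and then to invoke the hypothesis $\mu_m\Rightarrow\mathcal{L}(X)$. Since $\nabla$ is a compact Polish space, weak convergence on $\nabla$ is equivalent to convergence of all finite-dimensional marginals (as recalled just before the theorem), so I fix $d\ge1$ and write $\nu_n^{(d)},\mu_m^{(d)}$ for the $d$-dimensional marginals of $\nu_n,\mu_m$; the goal becomes $\nu_n^{(d)}\Rightarrow\mathcal{L}(X_1,\dots,X_d)$. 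By the triangle inequality for any metric $\rho$ inducing weak convergence,
\[
\rho\big(\nu_n^{(d)},\mathcal{L}(X_1,\dots,X_d)\big)\le\rho\big(\nu_n^{(d)},\mu_m^{(d)}\big)+\rho\big(\mu_m^{(d)},\mathcal{L}(X_1,\dots,X_d)\big);
\]
the second term tends to $0$ by hypothesis, so everything reduces to showing $\rho\big(\nu_n^{(d)},\mu_m^{(d)}\big)\to0$, i.e.\ that the scaled restricted partition tracks the continuous model $(X_{m,1},\dots,X_{m,d})$ at the level of any fixed number of coordinates.

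To obtain this I would quantify the argument behind Theorem \ref{thm:general}, keeping careful track of its $m$-dependence. Unwinding definitions, for a bounded Lipschitz $g$ on $[0,1]^d$,
\[
\E\,g\Big(\tfrac{k_1}{n},\dots,\tfrac{k_d}{n}\Big)=\frac{\sum_{\kappa\in\ml{P}_n(m)}g\big(\tfrac{k_1}{n},\dots,\tfrac{k_d}{n}\big)\,f\big(\tfrac{\kappa}{n}\big)}{\sum_{\kappa\in\ml{P}_n(m)}f\big(\tfrac{\kappa}{n}\big)},
\]
so the problem is a lattice-point / Riemann-sum comparison. The scaled partitions $\{\kappa/n:\kappa\in\ml{P}_n(m)\}$ are the points of a rescaled copy of the lattice $\{v\in\mathbb{Z}^m:\sum_i v_i=n\}$ lying in the ordered simplex $\overline{\nabla}_{m-1}$; attaching to each $\kappa$ its (clipped) fundamental cell $C(\kappa)$, these cells essentially tile $\overline{\nabla}_{m-1}$, all but an $O(m^3/n)$ fraction of them having the common volume $v_m:=\sqrt{m}\,n^{-(m-1)}$, and hence $|\ml{P}_n(m)|=(1+o(1))\,\mathrm{vol}(\nabla_{m-1})/v_m$. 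These facts rest on the negligibility of partitions of $n$ into $m$ parts having a repeated part or a zero part, and this is precisely where the assumption $m=o(n^{1/3})$ enters: it is essentially the range in which a uniformly random partition of $n$ into $m$ parts has distinct parts with probability $1-o(1)$. Granting this, one approximates $\sum_\kappa f(\tfrac{\kappa}{n})\,v_m$ and $\sum_\kappa g(\tfrac{k_1}{n},\dots)\,f(\tfrac{\kappa}{n})\,v_m$ by $\int_{\nabla_{m-1}}f=1$ and $\int_{\nabla_{m-1}}g\,f$ respectively, using the Lipschitz property of $f$ to control the local discretization error (which is governed by the mesh $1/n$ against the scale on which $f$ varies); taking the quotient, the ratio above converges to $\int_{\nabla_{m-1}}g(y_1,\dots,y_d)\,f(y)\,dy=\E\,g(X_{m,1},\dots,X_{m,d})$. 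Since this holds for every bounded Lipschitz $g$, we obtain $\rho\big(\nu_n^{(d)},\mu_m^{(d)}\big)\to0$, which is what was needed.

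Combining the two steps, every finite-dimensional marginal of $\nu_n$ converges to the corresponding marginal of $\mathcal{L}(X)$, and since $\nabla$ is compact Polish this gives $\nu_n\Rightarrow\mathcal{L}(X)$, i.e.\ $\big(\tfrac{k_1}{n},\dots,\tfrac{k_m}{n}\big)\Rightarrow X$. The hard part will be making the second step uniform over the whole range $m=o(n^{1/3})$: one must track the $m$-dependence of the partition count $|\ml{P}_n(m)|\sim\mathrm{vol}(\nabla_{m-1})/v_m$ together with the total volume of the boundary cells, and one must ensure that the Riemann-sum error remains $o(1)$ against $\int f=1$ even when $f$ --- for instance the Dirichlet densities of Corollary \ref{cor:dir} with $\alpha>2$ --- becomes exponentially peaked near the center of $\nabla_{m-1}$ as $m\to\infty$. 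These estimates refine those used to prove Theorem \ref{thm:general} and the Szekeres-type asymptotics underlying Theorem \ref{surprise_result}.
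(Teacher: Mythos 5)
Your proposal follows essentially the same route as the paper: reduce weak convergence on $\nabla$ to finite-dimensional marginals, use the triangle inequality for the bounded-Lipschitz metric with the intermediate law of $(X_{m,1},\dots,X_{m,m})$ (this is the paper's Theorem \ref{thm:infinity}), and control the remaining distance by the Riemann-sum comparison of the partition lattice with the simplex, which is exactly condition \eqref{drink}. The ``hard part'' you defer --- uniformity in $m$ over the range $m=o(n^{1/3})$ --- is precisely what the paper supplies by observing that the error bounds already derived in the proof of Theorem \ref{thm:general}, namely $O\big(\tfrac{\sqrt{m}}{n}+\tfrac{(1+m/n)^m}{n}\big)+|\mathcal{E}_{n,m}|$ in \eqref{eq:compare} with $|\mathcal{E}_{n,m}|\to 0$ from \eqref{eq:error}, were obtained with explicit $m$-dependence and hence apply verbatim when $m\to\infty$ (your caveat about the Lipschitz constant of $f=f_m$ possibly growing with $m$ is a fair observation, but the paper's argument absorbs it into the constant and does not treat it further).
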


We will prove Theorem \ref{thm:general-infinity} in Section \ref{sec:generalinf}. We have investigated the limiting distribution of $\kappa \in \ml{P}_n(m)$ chosen as in \eqref{eq:general} for both $m$ fixed and $m=o(n^{1/3})$. It would be interesting to understand the limiting distribution of $\kappa$ for other ranges of $m$. We leave this as an open question for future research.

\begin{question}
Let $\kappa=(k_1,\ldots,k_m) \in \ml{P}_n(m)$ be chosen with  probability $P_n(\kappa)$ as in (\ref{eq:general}). Identify the asymptotic distribution of $\kappa$ for the entire range $1\le m \le n$.
\end{question}

{\bf Notation:} For $x\in \mathbb{R}$, the notation $\lceil x \rceil$ stands for the smallest integer greater than or equal to $x$. The symbol $[x]$ denotes the largest integer less than or equal to $x$. We use $\mathbb{Z}$ to be the set of all real integers. For a set $A$, the notation $\#A$ or $|A|$ stands for the cardinality of $A$. We use $c\cdot A=\{c\cdot a: a\in A\}$. Denote $\ml{P}_0(k)=1$ for convenience. For $f(n),g(n)>0$, $f(n)\sim g(n)$ if $\lim_{n\to \infty} f(n)/g(n)=1$.

\section{Proofs of Theorems \ref{another_weight} and \ref{surprise_result} and Corollary \ref{cor:first}}\lbl{sec:another}
The strategies of deriving  Theorems \ref{another_weight} and \ref{surprise_result} are different. Also the proof of Theorem  \ref{surprise_result} is relatively lengthy. For clarity, their proofs are given in two sections. In Section \ref{sec:anotherfix} we will present the proofs of  Theorems \ref{another_weight}  and Corollary \ref{cor:first}. Theorem \ref{surprise_result} will be established in Section \ref{sec:anotherinf}.

\subsection{The Proofs of Theorems \ref{another_weight}  and Corollary \ref{cor:first}}\lbl{sec:anotherfix}
In this section, $m$ is assumed to be a fixed integer. We start with a lemma concerning the number of restricted partitions $\ml{P}_n(m)$ with the largest part fixed.
\begin{lemma}\lbl{lantern_wind} Let $l \geq 0$, $m\geq 2$ and $n\geq 1$ be integers. Set $j=m+n-m\lceil \frac{n}{m} \rceil$. Then $1\leq j \leq m$.
If $0\le l \le \frac{1}{m-1}(\frac{n}{m}-m)$, we have
\bea\lbl{eq:newbaby}
\# \Big\{(k_1, k_2,\ldots,k_m) \in \ml{P}_n(m);\, k_1=  \lceil \frac{n}{m} \rceil + l \Big\}= |\ml{P}_{ m(l+1)-j}(m-1)|.
\eea
If $ \frac{1}{m-1}(\frac{n}{m}-m) < l \le n-\lceil \frac{n}{m} \rceil$, we have
\bea\lbl{eq:othervalues}
\# \Big\{(k_1, k_2,\ldots,k_m) \in \ml{P}_n(m);\, k_1=  \lceil \frac{n}{m} \rceil + l \Big\} \le |\ml{P}_{ m(l+1)-j}(m-1)|.
\eea
\end{lemma}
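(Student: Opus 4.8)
The plan is to reduce the stated cardinality to a number of ordinary (height-unrestricted) partitions via two bijections, after first disposing of the elementary bound $1\le j\le m$.

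\emph{The bound $1\le j\le m$.} Write $N=\lceil \frac{n}{m}\rceil$. The defining inequalities $N-1<\frac{n}{m}\le N$ give $0\le mN-n\le m-1$, and since $j=m-(mN-n)$ this yields $1\le j\le m$; moreover $n=m(N-1)+j$, so $j$ is the residue of $n$ modulo $m$ normalized to lie in $\{1,\dots,m\}$, consistent with Theorem~\ref{another_weight}. I will also use the identity $mN-n=m-j$ recorded here.

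\emph{The two bijections.} Fix $l\ge 0$ and set $M=N+l$. First I would treat the trivial range: if $M>n$ — in particular once $l>n-N$ — no partition of $n$ has largest part $M$, so the left side of \eqref{eq:newbaby}--\eqref{eq:othervalues} is $0$ while the right side is $\ge 0$ because $m(l+1)-j\ge m-j\ge 0$; both claims then hold. So assume $0\le l\le n-N$. Deleting the largest part $k_1=M$ is a bijection between the set in \eqref{eq:newbaby} and the set of partitions of $n-M$ with at most $m-1$ parts, each part $\le M$ (using the ``at most $m$ parts'' convention allowing zero parts, together with $k_2\le k_1=M$). Complementing inside the $(m-1)\times M$ rectangle — the involution $(\mu_1\ge\cdots\ge\mu_{m-1}\ge 0)\mapsto(M-\mu_{m-1},\dots,M-\mu_1)$ — bijects this with the partitions of $(m-1)M-(n-M)=mM-n$ having at most $m-1$ parts, each $\le M$. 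Since $mM-n=m(N+l)-n=(mN-n)+ml=(m-j)+ml=m(l+1)-j$, I obtain
\[
\#\big\{(k_1,\dots,k_m)\in\ml{P}_n(m):k_1=M\big\}=\#\big\{\nu\in\ml{P}_{m(l+1)-j}(m-1):\nu_1\le M\big\}.
\]

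\emph{Removing the cap $\nu_1\le M$.} Inequality \eqref{eq:othervalues} is now immediate, since dropping the constraint $\nu_1\le M$ only enlarges the set, so the right side above is at most $|\ml{P}_{m(l+1)-j}(m-1)|$. For the equality \eqref{eq:newbaby} I would check that under $0\le l\le\frac{1}{m-1}(\frac{n}{m}-m)$ the cap is vacuous: any $\nu\in\ml{P}_{m(l+1)-j}(m-1)$ has $\nu_1\le m(l+1)-j$, so it suffices that $m(l+1)-j\le M=N+l$, i.e.\ $(m-1)l\le N-m+j$, and this follows from $(m-1)l\le\frac{n}{m}-m\le N-m+j$ using $\frac{n}{m}\le N$ and $j\ge 1$. (In this range $M<\frac{n-1}{m-1}\le n-1<n$, so the previous step applies; and the degenerate case $m(l+1)-j=0$ is covered by the convention $|\ml{P}_0(k)|=1$.) This gives \eqref{eq:newbaby}.

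\emph{Main difficulty.} There is no genuine obstacle: the argument is the two bijections together with the bookkeeping identity $m(N+l)-n=m(l+1)-j$ and the one-line estimate $\frac{n}{m}-m\le N-m+j$. The only points requiring care are the ``at most $m$ parts'' convention (so that deleting a part yields a partition with at most $m-1$ parts and complementation stays inside the rectangle) and handling the boundary ranges of $l$ first, as indicated above.
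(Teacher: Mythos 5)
Your argument is correct and is essentially the paper's own proof in combinatorial clothing: deleting the largest part $k_1=M$ and then complementing inside the $(m-1)\times M$ rectangle is exactly the paper's substitution $j_i=k_1-k_i=l_1-l_i$ applied to the shifted parts, and both proofs land on the same intermediate count (partitions of $m(l+1)-j$ into at most $m-1$ parts each at most $M$). The finishing steps also match: in the range $0\le l\le\frac{1}{m-1}(\frac{n}{m}-m)$ the cap $\nu_1\le M$ is vacuous by the same inequality $(m-1)l\le\frac{n}{m}-m$, and in the remaining range dropping the cap yields the stated upper bound.
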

\begin{proof} For $\kappa=(k_1,\ldots,k_m) \in \ml{P}_n(m)$, let us rewrite $k_i = \lceil \frac{n}{m} \rceil + l_i$ for $1\le i \le m$.   By assumption, $l_1=l \ge 0$.    Since $\kappa \vdash n$, we have $l_1 \ge l_2 \ge \ldots \ge l_m \geq -\lceil \frac{n}{m} \rceil$ and $l_1 + \sum_{i=2}^m l_i = n-m \lceil \frac{n}{m} \rceil  = j-m$ by assumption. Therefore,
\beaa
& & \#\Big\{(k_1, k_2,\ldots,k_m) \in \ml{P}_n(m);\, k_1=  \lceil \frac{n}{m} \rceil + l_1 \Big\}\\
& = &\#\Big\{(l_2,\ldots,l_m)\in \mathbb{Z}^{m-1};\, l_1 \ge l_2 \ge \ldots \ge l_m \geq -\lceil \frac{n}{m}\rceil\ \mbox{and}\  l_1 + \sum_{i=2}^m l_i = j-m  \Big\}\\
& = & \#\Big\{(j_2,\ldots,j_m)\in \mathbb{Z}^{m-1};\, l_1 +\lceil \frac{n}{m}\rceil \ge j_m \ge \ldots \ge j_2\geq 0\ \mbox{and}\  \sum_{i=2}^m j_i = m(l_1+1)-j \Big\}
\eeaa
by the transform $j_i = l_1 - l_i$ for $2\le i \le m$.

Assume $0\le l \le \frac{1}{m-1}(\frac{n}{m}-m)$. If $j_m \ge \ldots \ge j_2\geq 0$ and  $\sum_{i=2}^m j_i = m(l_1+1)-j$, then
\beaa
j_m\leq \sum_{i=2}^m j_i = m(l_1+1)-j\leq m(l_1+1)\leq l_1+\lceil \frac{n}{m} \rceil
\eeaa
by assumption, the notation $l_1=l$ and the fact $\lceil x\rceil\geq x$ for any $x\in \mathbb{R}$. It follows that the left hand side of (\ref{eq:newbaby}) is identical to
\beaa
& & \#\Big\{(j_2,\ldots,j_m)\in \mathbb{Z}^{m-1};\, j_m \ge \ldots \ge j_2\geq 0\ \mbox{and}\  \sum_{i=2}^m j_i = m(l_1+1)-j \Big\}\\
&=& |\ml{P}_{ m(l+1)-j}(m-1)|.~~~~~~~~~~~~~~~~~~~~~~~~~~~~~~~~~~~~~~~~~~~~~~~~~~~~~~~~~~~~~~~~~~~~~~~~~~~~~~~
\eeaa

For $ \frac{1}{m-1}(\frac{n}{m}-m) +1 \le l \le n-\lceil \frac{n}{m} \rceil$, the upper bound \eqref{eq:othervalues} follows directly from the definitions of the sets.
\end{proof}

Now we are ready to present the proof of Theorem \ref{another_weight}.

\begin{proof}[Proof of Theorem \ref{another_weight}] First, it is easy to check that for the subsequence
$n\equiv j_0$ (mod $m$), if we define $j=j_0$ if $1\le j_0 \le m-1$ and $j=m$ if $j_0=0$, then $j=m+n-m\lceil \frac{n}{m} \rceil$.
Set
\bea\lbl{glad_carpet}
M_n=\Big[\frac{1}{m-1}\Big(\frac{n}{m}-m\Big)\Big].
\eea
We first estimate the normalizing constant $c$ in (\ref{eq:another}).
\beaa
1&=& \sum_{\kappa \in \ml{P}_n(m)} P(\kappa) = c\cdot \sum_{k_1 =\lceil \frac{n}{m} \rceil}^n \sum_{(k_1, k_2,\ldots,k_m) \vdash n}  q^{k_1} \\
&=& c\cdot  \sum_{l=0}^{n-\lceil \frac{n}{m} \rceil} q^{\lceil \frac{n}{m} \rceil +l }\sum_{(\lceil \frac{n}{m} \rceil+l,k_2,\ldots,k_m) \vdash n} 1.
\eeaa
We first show that, as $n$ tends to infinity,
\beaa
& &\sum_{l=0}^{n-\lceil \frac{n}{m} \rceil} q^{\lceil \frac{n}{m} \rceil +l }\sum_{(\lceil \frac{n}{m} \rceil+l,k_2,\ldots,k_m) \vdash n} 1 \\
& &\sim \sum_{l=0}^{M_n} q^{\lceil \frac{n}{m} \rceil +l }\sum_{(\lceil \frac{n}{m} \rceil+l,k_2,\ldots,k_m) \vdash n} 1.
\eeaa
By Lemma \ref{lantern_wind},
\beaa
\frac{\sum_{M_n+1}^{n-\lceil \frac{n}{m} \rceil} q^{\lceil \frac{n}{m} \rceil +l }\sum_{(\lceil \frac{n}{m} \rceil+l,k_2,\ldots,k_m) \vdash n} 1}{\sum_{l=0}^{M_n} q^{\lceil \frac{n}{m} \rceil +l }\sum_{(\lceil \frac{n}{m} \rceil+l,k_2,\ldots,k_m) \vdash n} 1}
&\le& \frac{\sum_{M_n+1}^{n-\lceil \frac{n}{m} \rceil} q^l \cdot |\ml{P}_{ m(l+1)-j}(m-1)|}{\sum_{l=0}^{M_n} q^l \cdot |\ml{P}_{ m(l+1)-j}(m-1)|}\\
&\sim& \frac{\sum_{M_n+1}^{n-\lceil \frac{n}{m} \rceil} q^l \binom{ml+m-j-1}{m-2} }{\sum_{l=0}^{M_n} q^l \binom{ml+m-j-1}{m-2}},
\eeaa
where the last equality follows from \eqref{eq:size}. Note that the series $\sum_{s=1}^{\infty} s^{m-2} q^s$ converges for $0<q<1$. We have
\beaa
\frac{\sum_{M_n+1}^{n-\lceil \frac{n}{m} \rceil} q^l \binom{ml+m-j-1}{m-2} }{\sum_{l=0}^{M_n} q^l \binom{ml+m-j-1}{m-2}} =O\Big( \frac{\sum_{M_n+1}^{n-\lceil \frac{n}{m} \rceil} q^l l^{m-2}}{\sum_{l=0}^{M_n} q^l l^{m-2}}\Big) = o(1).
\eeaa
Therefore, one obtains the normalizing constant
\bea\lbl{Great_bug}
c\sim \frac{1}{ q^{\lceil \frac{n}{m} \rceil} \sum_{l=0}^{M_n} q^{l } \cdot |\ml{P}_{ m(l+1)-j}(m-1)|}.
\eea

Now we study the limiting joint distribution of the parts $$(k_1, k_2, \ldots, k_m) = \Big(\lceil \frac{n}{m} \rceil + l_1, \lceil \frac{n}{m} \rceil + l_2, \ldots,  \lceil \frac{n}{m} \rceil + l_m\Big).$$
First, we claim that it is enough to consider $l_1$ to be any fixed integer from $\{ 0, 1, 2, \ldots\}$. Indeed, for any $L=L(n) \to \infty$ as $n \to \infty$, it follows from \eqref{eq:size} that
\beaa
P\Big(k_1 \ge  \lceil \frac{n}{m} \rceil + L\Big)&=&\sum_{l=L}^{M_n} P(k_1 = \lceil \frac{n}{m} \rceil + l)\\
 &=& \sum_{l=L}^{M_n}c \cdot q^{\lceil \frac{n}{m} \rceil + l} |\ml{P}_{ ml+m-j}(m-1)|\\
 &\sim& c\cdot q^{\lceil \frac{n}{m} \rceil} \sum_{l=L}^{M_n} \frac{\binom{ml+m-j-1}{m-2}}{(m-1)!} q^{l}.
\eeaa
Plugging in the normalizing constant $c$ in \eqref{Great_bug} and letting $L \to \infty$, we have
\beaa
P(k_1 \ge  \lceil \frac{n}{m} \rceil + L) &=& O\Big(\frac{\sum_{l=L}^{M_n} l^{m-2} q^{l}}{\sum_{l=0}^{M_n} q^{l } \cdot |\ml{P}_{ ml+m-j}(m-1)|}\Big)\\
 &=& o(1)
\eeaa
as $n \to \infty$. The last equality follows from the fact that the series $\sum_{s=1}^{\infty} s^{m-2} q^s$ converges for $0<q<1$.
Likewise, we have as $n$ tends to infinity,
\bea\lbl{eq:constant}
c \sim q^{-\lceil \frac{n}{m} \rceil} \frac{1}{\sum_{l=0}^{\infty} q^l \cdot |\ml{P}_{ ml+m-j}(m-1)|}.
\eea
Therefore, for any given $l_1 = 0, 1, 2, \ldots$, we conclude that
\beaa
&& P\Big(k_1 = \lceil \frac{n}{m} \rceil + l_1, k_2 = \lceil \frac{n}{m} \rceil + l_2, \ldots, k_m=\lceil \frac{n}{m} \rceil + l_m\Big)\\
 &= &c \cdot q^{\lceil \frac{n}{m} \rceil + l_1} \to \frac{q^{l_1}}{\sum_{l=0}^{\infty} q^{l } \cdot |\ml{P}_{ ml+m-j}(m-1)|}.
\eeaa
The proof is completed.
\end{proof}

\begin{proof}[Proof of Corollary \ref{cor:first}]
By Theorem \ref{another_weight}, it is enough to consider $k_1 = \lceil \frac{n}{m}\rceil + l$ for $l \in \{0,1,2,\ldots\}$ in the limiting distribution. From \eqref{eq:another},  Lemma \ref{lantern_wind} and \eqref{eq:constant},
\beaa
P\Big(k_1 =  \lceil \frac{n}{m}\rceil + l\Big) &=& c\cdot q^{\lceil \frac{n}{m}\rceil + l} \sum_{(\lceil \frac{n}{m} \rceil+l,k_2,\ldots,k_m) \vdash n} 1\\
&=& c\cdot q^{\lceil \frac{n}{m}\rceil + l} \cdot |\ml{P}_{ ml_1+m-j}(m-1)|\\
&\to & \frac{q^{l}\cdot |\ml{P}_{ ml+m-j}(m-1)|}{\sum_{l=0}^{\infty} q^{l }\cdot |\ml{P}_{ ml+m-j}(m-1)|}
\eeaa
as $n\to \infty$.

Furthermore, since
\beaa
&&P(k_2 -\lceil \frac{n}{m}\rceil=l_2,\ldots, k_m -\lceil \frac{n}{m}\rceil=l_m ~|~ k_1 -\lceil \frac{n}{m}\rceil=l_1) \\
& &\quad = \frac{P(k_1 -\lceil \frac{n}{m}\rceil=l_1, k_2 -\lceil \frac{n}{m}\rceil =l_2,\ldots, k_m -\lceil \frac{n}{m}\rceil=l_m)}{P(k_1 -\lceil \frac{n}{m}\rceil=l_1)}\\
& & \quad \sim \frac{f(l_1,\ldots,l_m)}{f(l_1)} = \frac{1}{|\ml{P}_{ ml_1+m-j}(m-1)|}
\eeaa
as $n\to \infty$, it follows immediately the conditional distribution
of $(k_2 -\lceil \frac{n}{m}\rceil, \ldots, k_m-\lceil \frac{n}{m}\rceil)$ given $k_1=\lceil \frac{n}{m}\rceil+l_1$ $(l_1 \ge 0)$ is asymptotically a uniform distribution on the set
$\big\{(l_2,\ldots,l_m)\in \mathbb{Z}^{m-1};\, l_1 \ge l_2 \ge \ldots \ge l_m \ \mbox{and}\  l_1 + \sum_{i=2}^m l_i = j-m  \big\}.$ This completes the proof.
\end{proof}


\subsection{The Proof of Theorem \ref{surprise_result}}\lbl{sec:anotherinf}
Szekeres formula (see \citet{Sz1, Sz2}; see also \citet{Can} and \cite{Romik}) says that
\bea\lbl{marriage}
|\ml{P}_{n}(k)| \sim \frac{f(u)}{n}e^{\sqrt{n} g(u)}
\eea
uniformly for $k\ge n^{1/6}$, where $u=k/\sqrt{n}$,
\bea
& & f(u)=\frac{v}{2^{3/2}\pi u}\big(1-e^{-v}-\frac{1}{2}u^2e^{-v}\big)^{-1/2}, \lbl{snow_gone}\\
& & g(u)=\frac{2v}{u}-u\log (1-e^{-v}),\lbl{spring_come}
\eea
and $v=v(u)$ is determined implicitly by
\bea\lbl{blue_star}
u^2=\frac{v^2}{\int_0^v\frac{t}{e^t-1}\,dt}.
\eea

We start with a technical lemma that will be used in the proof of Theorem \ref{surprise_result} later.
\begin{lemma}\lbl{London} Let $\lambda>0$ be given.  Define $\psi(t)= \frac{g(t)}{t}-\frac{\lambda}{t^2}$ for $t>0.$ Then
\beaa
t_0:=\frac{\lambda}{(\int_0^{\lambda}\frac{t}{e^t-1}\,dt)^{1/2}}\ \ \mbox{satisfies}\ \ \psi''(t_0) = -\frac{2\lambda(e^{\lambda}-1)}{t_0^4(e^{\lambda}-1-\frac{1}{2}t_0^2)} <0.
\eeaa
Further, $\psi'(t_0)=0$, $\psi(t)$ is strictly increasing on $(0, t_0]$ and strictly decreasing on $[t_0, \infty)$.
\end{lemma}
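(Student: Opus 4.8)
The plan is to reparametrize both $\psi$ and the variable $t$ through the auxiliary quantity $v$ in Szekeres' formula. Write $I(v)=\int_0^v\frac{s}{e^s-1}\,ds$, so that \eqref{blue_star} becomes $t^2=v^2/I(v)$; substituting \eqref{spring_come} into the definition of $\psi$ collapses it to the clean form
$$\psi(t)=\frac{2v-\lambda}{t^2}-\log(1-e^{-v})=\frac{(2v-\lambda)I(v)}{v^2}-\log(1-e^{-v}),$$
where $v=v(t)$; note in particular that $v=\lambda$ corresponds exactly to $t=t_0$. The first step is to verify that $v\mapsto t(v)=v/\sqrt{I(v)}$ is a smooth strictly increasing bijection of $(0,\infty)$ onto itself. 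Differentiating $t^2=v^2/I(v)$ and using $I'(v)=v/(e^v-1)$ reduces positivity of $t'(v)$ to the inequality $2I(v)-v^2/(e^v-1)>0$ for $v>0$, which I would establish by noting the left-hand side vanishes at $v=0$ and has derivative $v^2e^v/(e^v-1)^2>0$. Equivalently $2I(v)/v^2>1/(e^v-1)$, an inequality used again below. By the inverse function theorem $v(t)$ is then $C^1$ (indeed smooth), so $\psi$ is smooth and $\psi''(t_0)$ is meaningful.

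Next I would differentiate $\phi(v):=\psi(t(v))$. Using $I'(v)=v/(e^v-1)$, a direct computation gives the factorization
$$\phi'(v)=(\lambda-v)\cdot\frac{1}{v}\Big(\frac{2I(v)}{v^2}-\frac{1}{e^v-1}\Big),$$
whose second factor is strictly positive by the step above. Since $t'(v)>0$ and $\psi'(t)=\phi'(v)/t'(v)$, the sign of $\psi'(t)$ equals that of $\phi'(v)$, hence of $\lambda-v$, hence of $t_0-t$ (using that $t(v)$ increases). Thus $\psi'(t_0)=0$, $\psi$ is strictly increasing on $(0,t_0]$ and strictly decreasing on $[t_0,\infty)$; in particular $t_0$ is the global maximum.

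Finally, because $\phi'(\lambda)=0$, differentiating $\psi'(t)=\phi'(v)/t'(v)$ and evaluating at $t_0$ gives $\psi''(t_0)=\phi''(\lambda)/t'(\lambda)^2$. Writing $\phi'(v)=(\lambda-v)k(v)$ we get $\phi''(\lambda)=-k(\lambda)$, and substituting $I(\lambda)=\lambda^2/t_0^2$ yields $k(\lambda)=\big(2(e^\lambda-1)-t_0^2\big)/\big(\lambda t_0^2(e^\lambda-1)\big)$, which is positive, so the denominator $e^\lambda-1-\frac{1}{2}t_0^2$ in the target formula is positive. Differentiating $t^2=v^2/I(v)$ at $v=\lambda$ and again inserting $I(\lambda)=\lambda^2/t_0^2$, $I'(\lambda)=\lambda/(e^\lambda-1)$ gives $t'(\lambda)=t_0\big(2(e^\lambda-1)-t_0^2\big)/\big(2\lambda(e^\lambda-1)\big)$. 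Dividing and simplifying, and rewriting $2(e^\lambda-1)-t_0^2=2\big(e^\lambda-1-\frac{1}{2}t_0^2\big)$, produces exactly $\psi''(t_0)=-\frac{2\lambda(e^\lambda-1)}{t_0^4(e^\lambda-1-\frac{1}{2}t_0^2)}<0$. The only real obstacle is computational: carrying out the derivative of $\phi$ carefully enough to see the $(\lambda-v)$ factor emerge cleanly, and then the bookkeeping in the final simplification; conceptually the proof is just the change of variables $t\leftrightarrow v$ together with the one elementary monotonicity inequality.
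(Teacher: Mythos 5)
Your proposal is correct: the computations check out (I verified the factorization $\phi'(v)=\frac{\lambda-v}{v}\bigl(\frac{2I(v)}{v^2}-\frac{1}{e^v-1}\bigr)$, the values $k(\lambda)$ and $t'(\lambda)$, and the final quotient $-k(\lambda)/t'(\lambda)^2$, which indeed simplifies to $-\frac{2\lambda(e^\lambda-1)}{t_0^4(e^\lambda-1-\frac{1}{2}t_0^2)}$), and the monotonicity statement follows since $t(v)$ is strictly increasing and $\mathrm{sign}\,\psi'(t)=\mathrm{sign}(\lambda-v(t))=\mathrm{sign}(t_0-t)$. Your route is organized differently from the paper's. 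The paper keeps $t$ (their $u$) as the independent variable: it implicitly differentiates the defining relation to get $v'(u)$, computes $g'(u)$, $g''(u)$ and $(g(u)/u)''$, and thereby obtains closed formulas $\psi'(t)=\frac{2(\lambda-v)}{t^3}$ and $\psi''(t)=\frac{1}{t^4}\bigl(4v-6\lambda-\frac{vt^2}{e^v-1-\frac{1}{2}t^2}\bigr)$ valid for all $t>0$, which are then evaluated at $t_0$ where $v(t_0)=\lambda$; the positivity of $e^v-1-\frac{1}{2}u^2$ is obtained from monotonicity of $s/(e^s-1)$, which actually yields the stronger bound $e^v-1>u^2$. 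You instead change variables to $v$, so that $\psi$ becomes the explicit function $\phi(v)=\frac{(2v-\lambda)I(v)}{v^2}-\log(1-e^{-v})$; this bypasses the formulas for $v'(u)$ and $g''(u)$ entirely, makes the factor $(\lambda-v)$ appear immediately, and confines the chain rule to the single evaluation $\psi''(t_0)=\phi''(\lambda)/t'(\lambda)^2$, while your inequality $2I(v)>v^2/(e^v-1)$ (proved by comparing derivatives from $v=0$) is exactly the positivity needed in the final formula. What the paper's version buys is an explicit expression for $\psi''$ at every $t$, and its stronger elementary bound; what yours buys is a shorter, cleaner calculation localized at the critical point, at the mild extra cost of checking that $v\mapsto t(v)$ is a smooth increasing bijection of $(0,\infty)$ onto itself (the surjectivity, via $t(v)\sim\sqrt{v}$ as $v\to0$ and $t(v)\to\infty$ as $v\to\infty$, should be stated explicitly, since you use the equivalence $v<\lambda\Leftrightarrow t<t_0$ on all of $(0,\infty)$). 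Both arguments rest on the same two pillars: the implicit relation defining $v$, and the identification of the critical point with $v=\lambda$.
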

\begin{proof} Trivially, the function $\frac{t}{e^t-1}=(\sum_{i=1}^{\infty}\frac{t^{i-1}}{i!})^{-1}$ is positive and decreasing in $t\in (0, \infty).$
It follows  that $v=v(u)>0$ for all $u\in (0, \infty)$ and
$$\frac{v^2}{u^2} = \int_0^v\frac{t}{e^t-1}\,dt > \frac{v^2}{e^v-1}.$$
Thus $e^v - 1- u^2 >0$. In particular,
\bea\lbl{bad_tree}
e^v-1-\frac{1}{2}u^2  >0.
\eea
By taking derivative from \eqref{blue_star}, we get
$$2v\cdot v' = 2u \int_0^v\frac{t}{e^t-1}\,dt + u^2 \frac{v\cdot v'}{e^v-1}.$$ This implies that
$\frac{v'}{e^v-1} = \frac{2v'}{u^2} - \frac{2v}{u^3}$, or equivalently,
\bea\lbl{eye_oil}
v'= \frac{v}{u} + \frac{uv}{2(e^v-1 - \frac{1}{2}u^2)}.
\eea
 Consequently,
$v'=v'(u)>0$ for all $u>0$, and thus $v(u)$ is strictly increasing on $(0, \infty)$. Take derivative on $g(u)$ in (\ref{spring_come}),  and use (\ref{blue_star}) and (\ref{eye_oil}) to see
\bea\lbl{xenix}
& & g'(u)= -\log(1-e^{-v});\\
& & g''(u)= -\frac{v' e^{-v}}{1-e^{-v}} = -\frac{v/u}{e^v-1-\frac{1}{2}u^2}.\nonumber
\eea
Therefore
\bea\lbl{Beijing}
\Big(\frac{g(u)}{u}\Big)'= \frac{ug'(u)-g(u)}{u^2}
\eea
and
\beaa
\Big(\frac{g(u)}{u}\Big)'' &= &\frac{g''(u)}{u}-2\frac{g'(u)}{u^2} + 2\frac{ g(u)}{u^3} \\
&=& \frac{v}{u^4} \Big( 4- \frac{u^2}{e^v-1-\frac{1}{2}u^2} \Big).
\eeaa
With the above preparation, we now study $\psi(t)$ (we switch the variable ``$u$" to ``$t$").
\bea
\psi''(t)&=&\Big(\frac{g(t)}{t} -\frac{\lambda}{t^2}\Big)''  \nonumber\\
&=& \frac{v}{t^4} \Big( 4- \frac{t^2}{e^v-1-\frac{1}{2}t^2} \Big) - \frac{6\lambda}{t^4}  \nonumber\\
&=& \frac{1}{t^4} \Big( 4v-6\lambda- \frac{v \cdot t^2}{e^v-1-\frac{1}{2}t^2}\Big). \lbl{teeth_appointment}
\eea
The assertions in (\ref{xenix}) and (\ref{Beijing}) imply
\beaa
\Big(\frac{g(t)}{t}\Big)'=\frac{-t^2\log(1-e^{-v})-tg(t)}{t^3}=-\frac{2v}{t^3}.
\eeaa
Thus, $\psi'(t)=\frac{2(\lambda-v)}{t^3}$. Thus, the stable point $t_0$ of $\psi(t)$ satisfies that $v(t_0)=\lambda$. This implies that $\psi(t)$ is strictly increasing on $(0, t_0]$ and strictly decreasing on $[t_0, \infty)$. It is not difficult to see from (\ref{blue_star}) that
\beaa
t_0=\frac{\lambda}{(\int_0^{\lambda}\frac{t}{e^t-1}\,dt)^{1/2}}.
\eeaa
Plug this into (\ref{teeth_appointment}) to get
\beaa
\psi''(t_0) &=& -\frac{1}{t_0^4}\Big( 2\lambda +\frac{\lambda \cdot t_0^2}{e^{\lambda}-1-\frac{1}{2}t_0^2}\Big)\\
& = & -\frac{2\lambda(e^{\lambda}-1)}{t_0^4(e^{\lambda}-1-\frac{1}{2}t_0^2)}<0
\eeaa
by (\ref{bad_tree}).
\end{proof}

\begin{proof}[Proof of Theorem \ref{surprise_result}]  Let $M_n=[\frac{1}{m-1}(\frac{n}{m}-m)]$ as in (\ref{glad_carpet}).   The assumption $m=o(n^{1/3})$ implies
\bea\lbl{beckey}
\lim_{n\to\infty}\frac{M_n}{m}=\infty.
\eea
Similar to (\ref{Great_bug}), we first claim that the normalization constant
\beaa
c\sim \frac{1}{ q^{\lceil \frac{n}{m} \rceil} \sum_{l=0}^{M_n} q^{l } \cdot |\ml{P}_{ m(l+1)-j}(m-1)|}.
\eeaa
Indeed, from Lemma \ref{lantern_wind},
\beaa
\frac{1}{c} &=& \sum_{l=0}^{n-\lceil \frac{n}{m} \rceil} q^{\lceil \frac{n}{m} \rceil +l }\sum_{(\lceil \frac{n}{m} \rceil+l,k_2,\ldots,k_m) \vdash n} 1 \\
&=&\sum_{l=0}^{M_n} q^{\lceil \frac{n}{m} \rceil +l } \cdot |\ml{P}_{ m(l+1)-j}(m-1)| + \sum_{l=M_n+1}^{n-\lceil \frac{n}{m} \rceil} q^{\lceil \frac{n}{m} \rceil +l } \sum_{(\lceil \frac{n}{m} \rceil+l,k_2,\ldots,k_m) \vdash n} 1
\eeaa
and
\beaa
\sum_{l=M_n+1}^{n-\lceil \frac{n}{m} \rceil} q^{\lceil \frac{n}{m} \rceil +l } \sum_{(\lceil \frac{n}{m} \rceil+l,k_2,\ldots,k_m) \vdash n} 1
&\le& \sum_{l=M_n+1}^{n-\lceil \frac{n}{m} \rceil} q^{\lceil \frac{n}{m} \rceil +l } \cdot |\ml{P}_{ m(l+1)-j}(m-1)|\\
&=& \sum_{l=M_n+2}^{n-\lceil \frac{n}{m} \rceil+1} q^{\lceil \frac{n}{m} \rceil +l } \cdot |\ml{P}_{lm-j}(m-1)|.
\eeaa
Observe that $|\ml{P}_{lm-j}(m-1)| \leq |\ml{P}_{lm}(lm)| \leq e^{K\sqrt{lm}}$ for some constant $K>0$ by Hardy-Ramanujan formula. Therefore,
\beaa
\sum_{l=M_n+1}^{n-\lceil \frac{n}{m} \rceil} q^{\lceil \frac{n}{m} \rceil +l } \sum_{(\lceil \frac{n}{m} \rceil+l,k_2,\ldots,k_m) \vdash n} 1
&\le& q^{\lceil \frac{n}{m} \rceil} \sum_{l=M_n}^{\infty} e^{-\lambda l + K\sqrt{lm}}\\
&\le& q^{\lceil \frac{n}{m} \rceil} \sum_{l=M_n}^{\infty} e^{-\lambda l/2 } \le q^{\lceil \frac{n}{m} \rceil} \frac{e^{-\lambda M_n/2 }}{1-e^{-\lambda/2}}\\
&=& o\Big(\sum_{l=0}^{M_n} q^{\lceil \frac{n}{m} \rceil +l } \cdot |\ml{P}_{ m(l+1)-j}(m-1)|\Big)
\eeaa
for $n$ sufficiently large. Hence, without loss of generality, we have
\beaa
P\Big(k_1 =  \lceil \frac{n}{m}\rceil + l\Big)
= \frac{q^{l} |\ml{P}_{ m(l+1)-j}(m-1)|}{  \sum_{l=0}^{M_n} q^{l } \cdot |\ml{P}_{ m(l+1)-j}(m-1)|}
\eeaa
for $l=0,1,2,\cdots, M_n$, where $j=m+n-m\lceil \frac{n}{m}\rceil$ and $1\leq j \leq m$. Thus,
\bea\lbl{aha}
P\Big(k_1 \leq  \lceil \frac{n}{m}\rceil + m\xi\Big)
= \frac{\sum_{l=1}^{[m\xi]+1} q^{l} \cdot |\ml{P}_{ lm-j}(m-1)|}{\sum_{l=1}^{M_n+1} q^{l } \cdot |\ml{P}_{lm-j}(m-1)|}
\eea
for any $\xi\geq 0$.

In the following, we first apply a fine analysis to estimate the denominator
$$\sum_{l=1}^{M_n+1} q^{l } \cdot |\ml{P}_{lm-j}(m-1)|.$$
We divide the range of summation into five parts: $1\le l \le c m$, $Cm\le l \le M_n$,  $cm\leq l < \gamma m -\sqrt{m}\log m$, $\gamma m+\sqrt{m}\log m < l \leq Cm$ and $\gamma m -\sqrt{m}\log m \leq l \leq \gamma m +\sqrt{m}\log m$ for some proper constants $c,C>0$ and $\gamma = t_0^{-2}$ (recall $t_0$ in Lemma \ref{London}). The most significant contribution in the summation comes from the range $\gamma m -\sqrt{m}\log m \leq l \leq \gamma m +\sqrt{m}\log m$ and others are negligible. The estimation for the numerator is similar.

{\it Step 1: Two rough tails are negligible}. First, by Hardy-Ramanujan formula, there exists a constant $K>0$ such that
\beaa
|\ml{P}_{lm-j}(m-1)| \leq |\ml{P}_{lm}(lm)| \leq e^{K\sqrt{lm}}
\eeaa
for $l\geq 1$ as $n$ is large. Set $\lambda=-\log q>0$. It follows that
\bea\lbl{pig}
\sum_{l=Cm}^{M_n+1}q^{l } \cdot |\ml{P}_{lm-j}(m-1)| \leq \sum_{l=Cm}^{\infty} e^{-\lambda l+K\sqrt{ml}}\leq \sum_{l=Cm}^{\infty} e^{-\lambda l/2} \leq \frac{1}{1-e^{-\lambda/2}}
\eea
for all $l\geq (\frac{4K^2}{\lambda^2})m$, which is satisfied if $C>\frac{4K^2}{\lambda^2}.$ Similarly, for the same $K$ as above,
\bea\lbl{brain_head}
\sum_{l=1}^{cm}q^{l } \cdot |\ml{P}_{lm-j}(m-1)| & \leq & \sum_{l=1}^{cm} q^{l } \cdot |\ml{P}_{[cm^2]}(m)| \nonumber\\
 &\leq & (cm)\cdot|\ml{P}_{[cm^2]}(m)| \leq (cm) \cdot e^{\sqrt{c}Km}
\eea
for all $c>0$ as $n$ is sufficiently large.

In the rest of the proof, the variable $n$ will be  hidden in $m=m_n$ and $j=j_n$. Keep in mind that $m$ is sufficiently large when we say  ``$n$ is sufficiently large".
We set two parameters
\bea\lbl{bigc}
C=\max\Big\{\frac{8K^2}{\lambda^2}, 2\gamma \Big\};
\eea
\bea\lbl{littlec}
c=\min\Big\{\frac{\psi(t_0)^2}{16K^2}, \frac{\gamma}{2} \Big\}.
\eea

{\it Step 2: Two refined tails are negligible}. Recall $t_0$ in Lemma \ref{London}. Define $\gamma=t_0^{-2}$ and
\bea
& & \Omega_1=\{l\in \mathbb{N};\, cm\leq l < \gamma m -\sqrt{m}B\},\ \ \ \Omega_2=\{l\in \mathbb{N};\, \gamma m -\sqrt{m}B\leq l \leq \gamma m +\sqrt{m}B\},\nonumber\\
& & \Omega_3=\{l\in \mathbb{N};\, \gamma m+\sqrt{m}B < l \leq Cm\}, \lbl{soil_white}
\eea
with $B=\log m$, where $c\in (0, \gamma)$ and $C>\gamma$ by \eqref{littlec} and \eqref{bigc}. The limit in (\ref{beckey}) asserts that $\Omega_2 \subset \{1,2,\cdots, M_n\}$ as $n$ is large.
Then
\bea\lbl{hayes}
\sum_{l=cm}^{Cm}q^{l } \cdot |\ml{P}_{lm-j}(m-1)|=\sum_{i=1}^3\sum_{l\in \Omega_i}q^{l } \cdot |\ml{P}_{lm-j}(m-1)|.
\eea
Easily,
\bea\lbl{more_water}
\sum_{l\in \Omega_1\cup \Omega_3}q^{l } \cdot |\ml{P}_{lm-j}(m-1)| \leq \sum_{l\in \Omega_1\cup \Omega_3}q^{l } \cdot |\ml{P}_{lm}(m)|.
\eea
Take $n=lm$ and $k=m$ in (\ref{marriage}), we get
\bea\lbl{lake_five}
|\ml{P}_{lm}(m)| \sim \frac{f(u)}{lm}e^{\sqrt{lm} g(u)}
\eea
uniformly for all $cm\leq l \leq Cm$ where $u=\big(\frac{m}{l})^{1/2}$. Notice
\beaa
q^{l} \cdot |\ml{P}_{lm}(m)| \sim \frac{f(u)}{lm}e^{-\lambda l+\sqrt{lm} g(u)}.
\eeaa
Consider function $-\lambda x+\sqrt{x m}\cdot g\big((mx^{-1})^{1/2}\big)$
for $x\in [cm,  Cm]$. Set $t=t_x=(mx^{-1})^{1/2}$. Then
\bea
-\lambda x+\sqrt{x m}\cdot g\big((mx^{-1})^{1/2}\big)
&=& -\frac{\lambda m}{t^2}+m\frac{g(t)}{t} \nonumber\\
&= & m\Big(\frac{g(t)}{t}-\frac{1}{t^2}\lambda\Big).\lbl{car_ship}
\eea
By (\ref{snow_gone}) and (\ref{spring_come}), $f(x)$ is a continuous function on $[C^{-1/2}, c^{-1/2}]$. Therefore, $\frac{f((mj^{-1})^{1/2})}{mj} =O(m^{-2})$ uniformly for all $j\in \Omega_1\cup \Omega_3$, which together with (\ref{more_water}) yields
\bea
& & \sum_{l\in \Omega_1\cup \Omega_3}q^{l } \cdot |\ml{P}_{lm-j}(m-1)| \nonumber\\
& \leq  & O\Big(\frac{1}{m^2}\Big)\sum_{l\in \Omega_1\cup \Omega_3}\exp\Big[m\Big(\frac{g(t_l)}{t_l}-\frac{\lambda}{t_l^2}\Big)\Big] \nonumber\\
& \leq &  O\Big(\frac{1}{m}\Big)\cdot\exp\Big[m\max_{l\in \Omega_1\cup \Omega_3}\Big(\frac{g(t_l)}{t_l}-\frac{\lambda}{t_l^2}\Big)\Big]. \lbl{Road_far}
\eea
Now
\beaa
\max_{l\in \Omega_1\cup \Omega_3}\Big(\frac{g(t_l)}{t_l}-\frac{\lambda}{t_l^2}\Big)
=\max_{l\in \Omega_1\cup \Omega_3}\Big\{\psi\Big(\sqrt{\frac{m}{l}}\Big)\Big\}.
\eeaa
Evidently,
\beaa
& & \Big\{\sqrt{\frac{m}{l}},\, l \in \Omega_1\Big\} \subset \Big[\Big(\frac{m}{\gamma m- \sqrt{m}\log m}\Big)^{1/2},\, \frac{1}{\sqrt{c}}\Big] \subset (t_0, \infty);\\
& & \Big\{\sqrt{\frac{m}{l}},\, l \in \Omega_3\Big\} \subset \Big[\frac{1}{\sqrt{C}}, \Big(\frac{m}{\gamma m+ \sqrt{m}\log m}\Big)^{1/2}\Big] \subset (0, t_0).
\eeaa
Recall Lemma \ref{London}, $\psi(t)= \frac{g(t)}{t}-\frac{\lambda}{t^2}$ is increasing  $(0, t_0]$ and decreasing in $[t_0, \infty).$ It follows that
\beaa
& & \max_{l\in \Omega_1\cup \Omega_3}\Big(\frac{g(t_l)}{t_l}-\frac{\lambda}{t_l^2}\Big)\\
& \leq & \max\Big\{\psi\Big(\frac{\sqrt{m}}{\sqrt{\gamma m- \sqrt{m}\log m}}\Big),\, \psi\Big(\frac{\sqrt{m}}{\sqrt{\gamma m+ \sqrt{m}\log m}}\Big)\Big\}.
\eeaa
Notice
\beaa
 \Big(\frac{\sqrt{m}}{\sqrt{\gamma m\pm \sqrt{m}\log m}}-t_0\Big)^2
& = & \Big[\frac{1}{\sqrt{\gamma}}\Big(1\pm \frac{\log m}{\gamma\sqrt{m}}\Big)^{-1/2}-t_0\Big]^2\\
& = & \frac{(\log m)^2}{4\gamma^3m}(1+o(1)).
\eeaa
From (\ref{shaoping}), we see that
\beaa
\psi\Big(\frac{\sqrt{m}}{\sqrt{\gamma m\pm \sqrt{m}\log m}}\Big)=\psi(t_0) -L\frac{(\log m)^2}{m} +O\big(m^{-3/2}(\log m)^3\big)
\eeaa
as $n$ is large, where $L=\frac{|\psi''(t_0)|}{8\gamma^3}>0.$ This joins (\ref{Road_far}) to yield that
\beaa
\frac{1}{\sqrt{m}}e^{-m\psi(t_0)}\sum_{l\in \Omega_1\cup \Omega_3}q^{l } \cdot |\ml{P}_{lm-j}(m-1)|  \leq  e^{-(L/2)(\log m)^2}
\eeaa and thus
\bea \lbl{air_exit}
 \sum_{l\in \Omega_1\cup \Omega_3}q^{l } \cdot |\ml{P}_{lm-j}(m-1)|  \leq  \sqrt{m} e^{m\psi(t_0)-(L/2)(\log m)^2}
\eea
as $n$ is large.

{\it Step 3. The estimate of $\sum_{j\in \Omega_2}$}. Take $n=lm-j$ and $k=m-1$ in (\ref{marriage}), we get
\beaa
|\ml{P}_{ml-j}(m-1)| \sim \frac{f(u)}{ml-j}e^{\sqrt{ml-j}\, g(u)}
\eeaa
uniformly for all $cm\leq l \leq Cm$ where $u=\frac{m-1}{\sqrt{lm-j}}$. By continuity,
\bea\lbl{red_army}
\frac{f(u)}{ml-j} \sim t_0^2f(t_0)\cdot \frac{1}{m^2}
\eea
uniformly for all $l\in \Omega_2$. Consequently,
\bea
& & \sum_{l\in \Omega_2}q^{l } \cdot |\ml{P}_{lm-j}(m-1)| \nonumber\\
&= & (1+o(1))\frac{t_0^2f(t_0)}{m^2}\sum_{l\in \Omega_2}\exp\Big\{-\lambda l+\sqrt{lm-j}\cdot g\Big(\frac{m-1}{\sqrt{lm-j}}\Big)\Big\}\nonumber\\
& \sim & \frac{t_0^2f(t_0)}{m^2}e^{-\lambda j/m}\sum_{l\in \Omega_2}\exp\Big\{-\frac{\lambda(m-1)^2}{mt_l^2}+\frac{m-1}{t_l}g(t_l)\Big\}
\eea
by setting  $t_x=(m-1)/\sqrt{mx-j}$ for $x\geq 2$ (recall $1\leq j \leq m$), and hence $x=\frac{j}{m}+\frac{(m-1)^2}{mt_x^2}$. It is easy to verify that
\bea\lbl{donkey_rabbit}
\max_{l\in \Omega_2}|t_l-t_0|=O\Big(\frac{\log m}{\sqrt{m}}\Big)
\eea
as $n\to\infty$. We then have
\bea\lbl{god_amen}
& & \sum_{l\in \Omega_2}q^{l } \cdot |\ml{P}_{lm-j}(m-1)| \nonumber\\
& \sim & \frac{t_0^2f(t_0)}{m^2}e^{\lambda t_0^{-2}-(\lambda j/m)}\sum_{l\in \Omega_2}\exp\Big\{(m-1)\big(\frac{g(t_l)}{t_l}-\frac{\lambda}{t_l^2}\big)\Big\} \lbl{nice_novel}.
\eea
Recall Lemma \ref{London}. Since $\psi'(t_0)=0$ and $\psi''(t_0)<0$, it is seen  from the Taylor's expansion and (\ref{donkey_rabbit}) that
\bea\lbl{shaoping}
\psi(t_x)=\psi(t_0) + \frac{1}{2}\psi''(t_0)(t_x-t_0)^2 + O(m^{-3/2}(\log m)^3)
\eea
uniformly for all $x \in \Omega_2$. It follows that
\beaa
& & \sum_{l\in \Omega_2}\exp\Big[(m-1)\Big(\frac{g(t_l)}{t_l}-\frac{\lambda}{t_l^2}\Big)\Big] \nonumber\\
& = & (1+o(1))\cdot e^{(m-1)\psi(t_0)}\sum_{l\in \Omega_2}\exp\Big[\frac{1}{2}\psi''(t_0)(t_l-t_0)^2m\Big].
\eeaa
It is trivial to check that
\beaa
\frac{m-1}{\sqrt{mx-j}}=\frac{m-1}{\sqrt{mx}} +\frac{j}{2\gamma^{3/2}m^2} + O\Big(\frac{\log m}{m^2}\Big)
\eeaa
uniformly for all $x \in \Omega_2$. Therefore,
\beaa
m\Big(\frac{m-1}{\sqrt{mx-j}}-t_0\Big)^2
& = & m\Big(\frac{m-1}{\sqrt{mx}}-t_0\Big)^2 + \frac{j}{\gamma^{3/2}m}\Big(\frac{m-1}{\sqrt{mx}}-t_0\Big) + O\Big(\frac{\log m}{\sqrt{m}}\Big)\\
& = & m\Big(\frac{m-1}{\sqrt{mx}}-t_0\Big)^2 + o(1)
\eeaa
uniformly for all $x \in \Omega_2$ by (\ref{donkey_rabbit}). This tells us that
\bea\lbl{peach1}
& & \sum_{l\in \Omega_2}\exp\Big[(m-1)\Big(\frac{g(t_l)}{t_l}-\frac{\lambda}{t_l^2}\Big)\Big] \nonumber\\
& = & (1+o(1))\cdot e^{(m-1)\psi(t_0)}\sum_{l\in \Omega_2}\exp\Big[\frac{1}{2}\psi''(t_0)\Big(\frac{m-1}{\sqrt{ml}}-t_0\Big)^2m\Big].
\eea
Set $a_m=\gamma m -\sqrt{m}\log m$, $b_m=\gamma m +\sqrt{m}\log m$, $c_m=(m-1)/\sqrt{m}$ and
\bea\lbl{definition_pizza}
\rho(x)=\exp\Big[\frac{1}{2}\psi''(t_0)\big(\frac{c_m}{\sqrt{x}}-t_0\big)^2m\Big]
\eea
for $x>0$. It is easy to check that there exists an absolute constant $C_1>0$ such that
\bea\lbl{drone}
\rho(x)\leq e^{-C_1(\log m)^2}
\eea
 for all $x\in (a_m, b_m)\backslash([a_m]+2, [b_m]-2)$. Hence
\bea\lbl{lum3}
\int_{a_m}^{b_m}\rho(x)\,dx
&=&\Big(\sum_{l=[a_m]}^{[b_m]-1}\int_{l}^{l+1}\rho(x)\,dx\Big) + \epsilon_m \lbl{lum2}
\eea
where $|\epsilon_m| \leq e^{-C_1(\log m)^2}$ for large $m$. By the expression $\rho(x)=\exp\big[\frac{1}{2}\psi''(t_0)\big(\frac{c_m}{\sqrt{x}}-t_0\big)^2m\big]$, we get
\beaa
\rho'(x)=-\frac{1}{2}\rho(x)\psi''(t_0)\Big(\frac{c_m}{\sqrt{x}}-t_0\Big)\frac{m {c_m}}{x^{3/2}}
\eeaa
for $x>0$. Easily, $\frac{m {c_m}}{x^{3/2}}=O(1)$ and $\frac{c_m}{\sqrt{x}}-t_0=O(\frac{\log m}{\sqrt{m}})$ uniformly for all $[a_m]\leq x\leq [b_m].$ Thus,
\beaa
|\rho'(x)|\leq \frac{(\log m)^2}{\sqrt{m}}\rho(x)
\eeaa
for all $[a_m]\leq x\leq [b_m] $. Therefore, by integration by parts,
\beaa
 \Big|\int_{l}^{l+1}\rho(x)\,dx-\rho(l)\Big| = \Big|\int_{l}^{l+1}\rho'(x)(l+1-x)\,dx\Big|
& \leq &   \int_{l}^{l+1}|\rho'(x)|\,dx\\
&\leq & \frac{(\log m)^2}{\sqrt{m}}\int_{l}^{l+1}\rho(x)\,dx
\eeaa
as $m$ is sufficiently large. This, (\ref{drone}) and (\ref{lum3}) imply
\bea\lbl{snow_heat}
& & \Big|\sum_{l\in \Omega_2}\rho(l)-\int_{a_m}^{b_m}\rho(x)\,dx\Big| \leq \frac{(\log m)^2}{\sqrt{m}}\Big(\int_{a_m}^{b_m}\rho(x)\,dx\Big) + e^{-C_1(\log m)^2}.
\eea
Set $\gamma_m=(\log m)\gamma^{-3/2}/2$. We see from (\ref{peach1}) and (\ref{definition_pizza}) that
\beaa
\int_{a_m}^{b_m}\rho(x)\,dx &= & \frac{2c_m^2}{\sqrt{m}}\int_{-\gamma_m +o(1)}^{\gamma_m +o(1)}\Big(-\frac{u}{\sqrt{m}}+t_0\Big)^{-3}e^{\frac{1}{2}\psi''(t_0)u^2}\,du\\
& = & (1+o(1))\frac{2\sqrt{m}}{t_0^3}\int_{-\gamma_m}^{\gamma_m}e^{\frac{1}{2}\psi''(t_0)u^2}\,du\\
& = & (1+o(1))\frac{2\sqrt{m}}{t_0^3}\int_{-\infty}^{\infty}e^{\frac{1}{2}\psi''(t_0)u^2}\,du\\
& \sim & \sqrt{m}\cdot \frac{1}{t_0^3}\sqrt{\frac{8\pi}{|\psi''(t_0)|}}
\eeaa
by making the transform $u=-\big(\frac{c_m}{\sqrt{x}}-t_0\big)\sqrt{m}$. Combining this, (\ref{peach1}) and (\ref{snow_heat}), we arrive at
\bea\lbl{peach8}
 e^{-(m-1)\psi(t_0)}\sum_{l\in \Omega_2}\exp\Big[(m-1)\Big(\frac{g(t_l)}{t_l}-\frac{\lambda}{t_l^2}\Big)\Big]
& = & (1+o(1))\sum_{l\in \Omega_2}\rho(l)\nonumber\\
& \sim &   \sqrt{m}\cdot \frac{1}{t_0^3}\sqrt{\frac{8\pi}{|\psi''(t_0)|}}
\eea
as $n$ is sufficiently large.   This and (\ref{god_amen}) yield
\bea
& & \sum_{l\in \Omega_2}q^{l } \cdot |\ml{P}_{lm-j}(m-1)| \nonumber\\
& \sim & \frac{t_0^2f(t_0)}{m^2}e^{\lambda t_0^{-2}-(\lambda j/m)}\sum_{l\in \Omega_2}\exp\Big\{(m-1)\big(\frac{g(t_l)}{t_l}-\frac{\lambda}{t_l^2}\big)\Big\} \nonumber\\
&  \sim  & \frac{f(t_0)e^{\lambda t_0^{-2}-\psi(t_0)-(\lambda j/m)}}{t_0}\cdot \sqrt{\frac{8\pi}{|\psi''(t_0)|}}\cdot \frac{e^{m\psi(t_0)}}{m^{3/2}} \lbl{red_gross}
\eea
as $m\to\infty$.

{\it Step 4. Wrap-up of the denominator}. By the choice of $c$ in \eqref{littlec}, we have $\sqrt{c}\le (4K)^{-1}\psi(t_0)$ in (\ref{brain_head}). Therefore we get from (\ref{pig}) that
\bea\lbl{ding}
\Big(\sum_{l=1}^{cm}+\sum_{l=Cm}^{M_n+1}\Big)q^{l } \cdot |\ml{P}_{lm-j}(m-1)| \leq  e^{\psi(t_0)m/2}
\eea
as $n$ is large. This and (\ref{hayes}) imply
\beaa
\sum_{l=1}^{M_n+1}q^{l } \cdot |\ml{P}_{lm}(m-1)| = O\big(e^{\psi(t_0)m/2}\big)+\sum_{i=1}^3\sum_{l\in \Omega_i}q^{l } \cdot |\ml{P}_{lm-j}(m-1)|
\eeaa
as $m\to\infty.$ This identity together with  (\ref{air_exit}) and (\ref{red_gross}) concludes that
\bea\lbl{victory}
 \sum_{l=1}^{M_n+1}q^{l } \cdot |\ml{P}_{lm-j}(m-1)|
\, \sim \,  \frac{f(t_0)e^{\lambda t_0^{-2}-\psi(t_0)-(\lambda j/m)}}{t_0}\cdot \sqrt{\frac{8\pi}{|\psi''(t_0)|}}\cdot \frac{e^{m\psi(t_0)}}{m^{3/2}}
\eea
as $m\to\infty$.

{\it Step 5. Numerator}. We need to show
\beaa
\lim_{n\to\infty}P\Big(\frac{1}{\sqrt{m}}\Big(k_1  -\lceil \frac{n}{m}\rceil - \frac{m}{t_0^2}\Big) \leq x\Big)= \frac{1}{\sqrt{2\pi}\, \sigma}\int_{-\infty}^xe^{-\frac{t^2}{2\sigma^2}}\,dt
\eeaa
for every $x \in \mathbb{R}$, where $\sigma=\frac{1}{\sqrt{|\psi''(t_0)|}}$. Recall $\gamma=t_0^{-2}$. By (\ref{aha}),
\bea\lbl{concrete_leave}
P\Big(\frac{1}{\sqrt{m}}\Big(k_1  -\lceil \frac{n}{m}\rceil - \frac{m}{t_0^2}\Big) \leq x\Big)=
\frac{\sum_{l=1}^{b_m'} q^{l} \cdot |\ml{P}_{ ml-j}(m-1)|}{\sum_{l=1}^{M_n+1} q^{l } \cdot |\ml{P}_{lm-j}(m-1)|}
\eea
where $b_m'=[\gamma m + \sqrt{m}\,x]+1$. Recall $\sqrt{c}\le (4K)^{-1}\psi(t_0)$ be as before. It is known from (\ref{ding}) that
\bea\lbl{quiet_water}
\sum_{l=1}^{cm}q^{l } \cdot |\ml{P}_{lm-j}(m-1)| \leq  e^{\psi(t_0)m/2}
\eea
as $n$ is large. Let $\Omega_1$ and $\Omega_2$ be as in (\ref{soil_white}). Set $\Omega_2'=\{l\in \mathbb{N};\, \gamma m -\sqrt{m}\log m\leq l \leq b_m'\}.$ Notice $\Omega_2'\subset \Omega_2$ for large $m$. By (\ref{air_exit}), (\ref{god_amen}) and (\ref{quiet_water}),
\bea
& & \sum_{l=1}^{b_m'} q^{l} \cdot |\ml{P}_{ ml-j}(m-1)| \nonumber\\
&= & O\big(e^{\psi(t_0)m/2}\big)+ \sqrt{m}e^{m\psi(t_0)-(L/2)(\log m)^2}+\sum_{l\in  \Omega_2'}q^{l} \cdot |\ml{P}_{ ml-j}(m-1)| \nonumber\\
& = & O\Big(\sqrt{m}\cdot e^{m\psi(t_0)-(L/2)(\log m)^2}\Big)+ \nonumber\\
& &~~~~~~~~~~~~~~~~\frac{t_0^2f(t_0)}{m^2}e^{\lambda t_0^{-2}-(\lambda j/m)}\sum_{l\in \Omega'_2}\exp\Big\{(m-1)\Big(\frac{g(t_l)}{t_l}-\frac{\lambda}{t_l^2}\Big)\Big\} \lbl{pecan_eat}
\eea
as $m\to \infty$. Review the derivation between (\ref{peach1}) and (\ref{peach8}) and replace $b_m$ by $b_m'$. by the fact $\Omega_2'\subset \Omega_2$ for large $m$ again, we have
\beaa
& & e^{-(m-1)\psi(t_0)}\sum_{l\in \Omega_2'}\exp\Big[(m-1)\Big(\frac{g(t_l)}{t_l}-\frac{\lambda}{t_l^2}\Big)\Big] \nonumber\\
& =& \int_{a_m}^{b_m'}\rho(x)\,dx + \epsilon_m+ O\big(\frac{\log m}{m^{1/4}}\big)
\eeaa
where, as mentioned before,  $a_m=\gamma m -\sqrt{m}\log m$ and $|\epsilon_m| \leq e^{-C_1(\log m)^2}$ for large $m$. Let us evaluate the integral above. In fact, from (\ref{definition_pizza})  we see that
\beaa
\int_{a_m}^{b_m'}\rho(x)\,dx=\int_{a_m}^{b_m'}\exp\Big[\frac{1}{2}\psi''(t_0)\big(\frac{c_m}{\sqrt{x}}-t_0\big)^2m\Big]\,dx.
\eeaa
Set $w=-\big(\frac{c_m}{\sqrt{x}}-t_0\big)\sqrt{m}$. Then
\beaa
\int_{a_m}^{b_m'}\rho(x)\,dx & = & \frac{2c_m^2}{\sqrt{m}}\int_{-\gamma_m +o(1)}^{\frac{x}{2\gamma^{3/2}}+o(1)}\Big(-\frac{w}{\sqrt{m}}+t_0\Big)^{-3}e^{-\frac{1}{2}|\psi''(t_0)|w^2}\,dw\\
& = & (1+o(1)) \frac{2\sqrt{m}}{t_0^3}\int_{-\infty}^{\frac{x}{2\gamma^{3/2}}}e^{-\frac{1}{2}|\psi''(t_0)|w^2}\,dw\\
& = & (1+o(1)) \frac{\sqrt{m}}{t_0^3\gamma^{3/2}}\int_{-\infty}^{x}e^{-w^2/(2\sigma^2)}\,dw =  (1+o(1))\sqrt{m} \int_{-\infty}^{x}e^{-w^2/(2\sigma^2)}\,dw
\eeaa
where $\gamma_m=(\log m)\gamma^{-3/2}/2$ and $\sigma^2=\frac{4\gamma^3}{|\psi''(t_0)|}.$ Collect the assertions from (\ref{pecan_eat}) to the above to obtain
\beaa
& & \sum_{l=1}^{b_m'} q^{l} \cdot |\ml{P}_{ ml-j}(m-1)|\\
&=& (1+o(1))\frac{t_0^2f(t_0)}{m^2}e^{\lambda t_0^{-2}-(\lambda j/m)}\cdot e^{(m-1)\psi(t_0)}\cdot \sqrt{m} \int_{-\infty}^{x}e^{-w^2/(2\sigma^2)}\,dw\\
&\sim & t_0^{2}f(t_0)\cdot e^{\lambda t_0^{-2}-\psi(t_0)-(\lambda j/m)}\cdot \frac{e^{m\psi(t_0)}}{m^{3/2}}\int_{-\infty}^{x}e^{-w^2/(2\sigma^2)}\,dw
\eeaa
as $m\to\infty$. Join this with (\ref{victory}) and (\ref{concrete_leave}) to conclude that
\bea
P\Big(\frac{1}{\sqrt{m}}\Big(k_1  -\lceil \frac{n}{m}\rceil - \frac{m}{t_0^2}\Big) \leq x\Big)\to \frac{1}{\sqrt{2\pi}\, \sigma}\int_{-\infty}^{x}e^{-w^2/(2\sigma^2)}\,dw
\eea
as $m\to\infty$. Notice that $\sigma^2=\frac{4}{|\psi''(t_0)|t_0^6}$. The proof is completed by using Lemma \ref{London} and the fact   $\gamma=t_0^{-2}$.
\end{proof}

\section{Proofs of Theorems \ref{thm:general} and \ref{thm:general-infinity} and Corollaries \ref{cor:dir} and \ref{cor:sphere}}\label{sec:general}

In Section \ref{sec:generalfix} below, we will prove Theorem \ref{thm:general}, Corollaries \ref{cor:dir} and \ref{cor:sphere} where $m$ is assumed to be a fixed integer. Theorem \ref{thm:general-infinity} studies the case when $m$ tends to infinity with $n$ and $m=o(n^{1/3})$. Its proof is given in Section \ref{sec:generalinf}.

\subsection{The Proofs of Theorem \ref{thm:general} and Corollaries \ref{cor:dir} and \ref{cor:sphere}}\lbl{sec:generalfix}

From \citet{EL}, we have
\bea\lbl{eq:size}
\ml{P}_n(m) \sim \frac{\binom{n-1}{m-1}}{m!}
\eea
uniformly for $m=o(n^{1/3})$.
\begin{proof}[Proof of Theorem \ref{thm:general}]
To prove the conclusion, it suffices to show that for any bounded continuous function $\psi$ on $\overline{\nabla}_{m-1}$,
$$\E\left(\psi(\frac{k_1}{n},\ldots,\frac{k_m}{n}) \right) \to \E\left(\psi(x_1,\ldots,x_m) \right)$$
as $n$ tends to infinity, where $(x_1,\ldots,x_m) \sim \mu$.
By definition,
\bea\lbl{eq:part}
\E\left(\psi(\frac{k_1}{n},\ldots,\frac{k_m}{n}) \right)
&=& \frac{\sum_{(k_1,\ldots,k_m) \in \ml{P}_n(m)} \psi(\frac{k_1}{n},\ldots,\frac{k_m}{n}) f(\frac{k_1}{n},\ldots,\frac{k_m}{n})}{\sum_{(k_1,\ldots,k_m) \in \ml{P}_n(m)} f(\frac{k_1}{n},\ldots,\frac{k_m}{n})}\\ \nonumber
&=&  \frac{ n^{-(m-1)} \sum_{(k_1,\ldots,k_m) \in \mathcal{R}_n(m)}\psi(\frac{k_1}{n},\ldots,\frac{k_m}{n}) f(\frac{k_1}{n},\ldots,\frac{k_m}{n})}{n^{-(m-1)} \sum_{(k_1,\ldots,k_m) \in \ml{P}_n(m)} f(\frac{k_1}{n},\ldots,\frac{k_m}{n})}+\mathcal{E}_{n,m},
\eea
where the set
\beaa
\mathcal{R}_n(m):= \{(k_1,\ldots,k_m) \vdash n; k_1 > \ldots >k_m >0\}
\eeaa
and
\beaa
\mathcal{E}_{n,m}:=\frac{\sum_{(k_1,\ldots,k_m) \in \ml{P}_n(m)\setminus \mathcal{R}_n(m)} \psi(\frac{k_1}{n},\ldots,\frac{k_m}{n}) f(\frac{k_1}{n},\ldots,\frac{k_m}{n}) }{\sum_{(k_1,\ldots,k_m) \in \ml{P}_n(m)} f(\frac{k_1}{n},\ldots,\frac{k_m}{n})}.
\eeaa
On the other hand,
\bea\lbl{eq:gen}
\E(\psi(x_1,\ldots,x_m)) &=& \int_{\nabla_{m-1}} \psi(y_1,\ldots,y_m) f(y_1,\ldots,y_m) \,dy_1\ldots dy_{m-1}\\ \nonumber
&=& \frac{\int_{\nabla_{m-1}} \psi(y_1,\ldots,y_m) f(y_1,\ldots,y_m) \,dy_1\ldots dy_{m-1}}{\int_{\nabla_{m-1}} f(y_1,\ldots,y_m) \,dy_1\ldots dy_{m-1}}.
\eea
In order to compare \eqref{eq:part} and \eqref{eq:gen}, we divide the proof into a few steps.

{\it Step 1: Estimate of $|\mathcal{E}_{n,m}|$}. We claim that the term $\mathcal{E}_{n,m}$ is negligible as $n\to \infty$. We first estimate the size of $\mathcal{R}_n(m)$.
For any $(k_1, \cdots, k_m)\in \mathcal{R}_n(m) $, set $j_i = k_i-(m-i+1)$ for $1\le i \le m$. It is easy to verify that $j_{i-1} - j_i =k_{i-1}-k_{i}-1\ge 0$ for $2\le i \le m$. Thus $$j_1 + \cdots + j_{m} = n-\binom{m+1}{2}$$ and $j_1 \ge \cdots \ge j_m \ge 0$. Therefore,
$(j_1,\cdots,j_m) \in \ml{P}_{n-\binom{m+1}{2}}(m).$ Indeed, this transform is a bijection between $\mathcal{R}_n(m)$ and $\ml{P}_{n-\binom{m+1}{2}}(m)$, which implies $$|\mathcal{R}_n(m)|=|\ml{P}_{n-\binom{m+1}{2}}(m)|.$$
On the other hand, we know from \eqref{eq:size},
$$|\ml{P}_N(m)| \sim \frac{\binom{N-1}{m-1}}{m!}$$
as $N\to \infty$.
Thus by Stirling's formula,
\beaa
\frac{|\mathcal{R}_n(m)|}{|\ml{P}_n(m)|} &\sim& \frac{\binom{n-\binom{m+1}{2}-1}{m-1} }{\binom{n-1}{m-1}}=\frac{(n-\binom{m+1}{2}-1)!(n-m)!}{(n-1)!(n-\binom{m+1}{2}-m)!} \\
&\sim& \frac{(n-\binom{m+1}{2})!(n-m)!}{n!(n-\binom{m+1}{2}-m)!}\\
&\sim& \frac{(1-\frac{m}{n})^{1/2}}{(1-\frac{m}{n-\binom{m+1}{2}})^{1/2}}\frac{(1-\frac{m}{n})^n (1-\frac{\binom{m+1}{2}}{n-m})^m}{(1-\frac{m}{n-\binom{m+1}{2}})^{n-\binom{m+1}{2}}}
\eeaa
as $n\to \infty$. By assumption $m=o(\sqrt{n})$, we have $\frac{n-\binom{m+1}{2}}{m} \to \infty$ with $n$. Using the fact that $\lim_{N\to \infty} (1+\frac{x}{N})^N = \exp(x)$, we obtain
\beaa
\frac{|\mathcal{R}_n(m)|}{|\ml{P}_n(m)|} \sim \exp\left(-\frac{m\binom{m+1}{2}}{n-m} \right).
\eeaa
Thus as long as $m = o(n^{1/3})$,
\beaa
 |\mathcal{R}_n(m)| \sim |\ml{P}_n(m)|\ \ \ \ \mbox{and}\ \ \ \
 |\ml{P}_n(m)\setminus \mathcal{R}_n(m)| = o(|\ml{P}_n(m)|)
\eeaa
as $n\to \infty$.

 Further, since $\int_{\nabla_{m-1}} f(y_1,\ldots,y_m) \, dy_1\ldots dy_{m-1}=1$, there exists a region $\mathcal{S}$ on $\overline{\nabla}_{m-1}$
 whose measure $|\mathcal{S}| \ge \mu |\nabla_{m-1}|$ for some constant $\mu>0$ such that $f(y_1,\ldots,y_m)>c$ on $\mathcal{S}$ for some $c>0$. Thus, for $n$ sufficiently large, $f(k_1/n,\ldots,k_m/n)> c_0>0$ for $(k_1,\ldots,k_m)$ in a subset of  $\ml{P}_n(m)$ with cardinality at least a small fraction of $|\ml{P}_n(m)|$. Also since the functions $\psi$ and $f$ are bounded on  $\nabla_{m-1}$, we conclude
\bea\lbl{eq:error}
|\mathcal{E}_{n,m}| = O\left(\frac{|\ml{P}_n(m)\setminus \mathcal{R}_n(m)|}{|\ml{P}_n(m)|}\right) = o(1)
\eea
as $n\to \infty$, as long as $m = o(n^{1/3})$.

{\it Step 2: Compare the numerators of \eqref{eq:part} and \eqref{eq:gen}}. For convenience, denote
\begin{equation}\label{def:G}
G(y_1,\ldots,y_{m-1}) = \psi\Big(y_1,\ldots,y_{m-1},1-\sum_{i=1}^{m-1}y_i\Big) f\Big(y_1,\ldots,y_{m-1},1-\sum_{i=1}^{m-1}y_i\Big).
\end{equation}
Since $\psi, f$ are bounded continuous functions on $\overline{\nabla}_{m-1}$, it is easy to check that $G$ is also bounded and continuous on $\overline{\nabla}_{m-1}$.
We can rewirte the numberator in \eqref{eq:part} as follows.
\beaa
\mathcal{I}_1 &:=& \frac{1}{n^{m-1}} \sum_{\substack{k_1 > \ldots >k_m >0 \\ k_1+\ldots+k_m=n}} G\Big(\frac{k_1}{n},\ldots,\frac{k_{m-1}}{n}\Big) \\
&=& \frac{1}{n^{m-1}} \sum_{(k_1,\ldots,k_{m-1}) \in \{1,\ldots,n\}^{m-1}} G\Big(\frac{k_1}{n},\ldots,\frac{k_{m-1}}{n}\Big) I_{\mathcal{A}_n}\\
&=& \sum_{(k_1,\ldots,k_{m-1}) \in \{1,\ldots,n\}^{m-1}} \int_{\frac{k_1-1}{n}}^{\frac{k_1}{n}} \cdots \int_{\frac{k_{m-1}-1}{n}}^{\frac{k_{m-1}}{n}} G\Big(\frac{k_1}{n},\ldots,\frac{k_{m-1}}{n}\Big) I_{ \mathcal{A}_n } ~d y_1 \dots d y_{m-1},
\eeaa
where $I_{ \mathcal{A}_n }$ is the indicator function of set  $\mathcal{A}_n$ defined as below
\bea\lbl{eq:An}
\mathcal{A}_n= \frac{1}{n} \Big\{ (k_1, \cdots, k_{m-1})\in \{1,\ldots,n \}^{m-1};\, \frac{k_1}{n} > \cdots > \frac{k_{m-1}}{n} > 1- \sum_{i=1}^{m-1} \frac{k_i}{n} > 0\Big\}.
\eea
Similarly,
\beaa
\mathcal{I}_2 &:=& \int_{\nabla_{m-1}} G(y_1,\ldots,y_{m-1})  \,dy_1\ldots dy_{m-1} \\
&=& \int_{[0,1]^{m-1}} G(y_1,\ldots,y_{m-1})I_{\mathcal{A}}  \,dy_1\ldots dy_{m-1}\\
&=& \sum_{(k_1,\ldots,k_{m-1}) \in \{1,\ldots,n\}^{m-1}} \int_{\frac{k_1-1}{n}}^{\frac{k_1}{n}} \cdots \int_{\frac{k_{m-1}-1}{n}}^{\frac{k_{m-1}}{n}} G(y_1,\ldots,y_{m-1}) I_{ \mathcal{A} } ~d y_1 \dots d y_{m-1},
\eeaa
where the $I_{ \mathcal{A}}$ is the indicator function of set  $\mathcal{A}$  denoted by
\bea\lbl{eq:A}
 \mathcal{A}=\Big\{(x_1, \cdots, x_{m-1})\in [0,1]^{m-1};\, x_1 > \cdots > x_{m-1} > 1- \sum_{i=1}^{m-1} x_i \ge 0\Big\}.
\eea
Now we estimate the difference between the numerators in \eqref{eq:part} and \eqref{eq:gen}.
\beaa
&& \mathcal{I}_1 - \mathcal{I}_2\\
& = & \sum_{(k_1,\ldots,k_{m-1}) \in \{1,\ldots,n\}^{m-1}} \int_{\frac{k_1-1}{n}}^{\frac{k_1}{n}} \cdots \int_{\frac{k_{m-1}-1}{n}}^{\frac{k_{m-1}}{n}} \nonumber\\
& & \quad \quad \quad \quad \quad \quad  \quad \quad \left( G\Big(\frac{k_1}{n},\ldots,\frac{k_{m-1}}{n}\Big) I_{ \mathcal{A}_n } - G(y_1,\ldots,y_{m-1}) I_{ \mathcal{A} } \right) ~d y_1 \dots d y_{m-1}
\eeaa
which is identical to
\beaa
&&\sum_{(k_1,\ldots,k_{m-1}) \in \{1,\ldots,n\}^{m-1}} \int_{\frac{k_1-1}{n}}^{\frac{k_1}{n}} \cdots \int_{\frac{k_{m-1}-1}{n}}^{\frac{k_{m-1}}{n}} \nonumber\\
& & \quad \quad \quad \quad \quad \quad  \quad  \left(G\Big(\frac{k_1}{n},\ldots,\frac{k_{m-1}}{n}\Big) - G(y_1,\ldots,y_{m-1})\right) I_{ \mathcal{A}_n }  ~d y_1 \dots d y_{m-1}\\
& & \quad \quad + \sum_{(k_1,\ldots,k_{m-1}) \in \{1,\ldots,n\}^{m-1}} \int_{\frac{k_1-1}{n}}^{\frac{k_1}{n}} \cdots \int_{\frac{k_{m-1}-1}{n}}^{\frac{k_{m-1}}{n}} \nonumber\\
& & \quad \quad \quad \quad \quad \quad  \quad \quad G(y_1,\ldots,y_{m-1}) \left( I_{ \mathcal{A}_n } - I_{ \mathcal{A} }  \right)~d y_1 \dots d y_{m-1}\\
&& := \mathcal{S}_1 + \mathcal{S}_2.
\eeaa

{\it Step 3: Estimate $\mathcal{S}_1$.} Since $G$ is uniformly continuous on $\overline{\nabla}_{m-1}$, for any $\varepsilon>0$ and any $y_i \in [\frac{k_i-1}{n}, \frac{k_i}{n}]~(1\le i \le m-1)$,
\begin{align}\label{eq:bdG}
 \Big|G\Big(\frac{k_1}{n},\ldots,\frac{k_{m-1}}{n}\Big) - G(y_1,\ldots,y_{m-1})\Big| < \varepsilon,
\end{align}
when $n$ is sufficiently large.
Thus
\begin{align}\label{eq:S1}
|\mathcal{S}_1| &\le \sum_{(k_1,\ldots,k_{m-1}) \in \{1,\ldots,n\}^{m-1}} \int_{\frac{k_1-1}{n}}^{\frac{k_1}{n}} \cdots \int_{\frac{k_{m-1}-1}{n}}^{\frac{k_{m-1}}{n}} \nonumber\\
&  \quad \quad \quad \quad \quad \quad  \quad  \Big|G\Big(\frac{k_1}{n},\ldots,\frac{k_{m-1}}{n}\Big) - G(y_1,\ldots,y_{m-1})\Big|  ~d y_1 \dots d y_{m-1} \nonumber\\
&\le \varepsilon  \Big(\frac{1}{n}\Big)^{m-1} n^{m-1}\nonumber\\
& = \varepsilon
\end{align}
for $n$ sufficiently large.

{\it Step 4: Estimate $\mathcal{S}_2$.} Since $G$ is bounded on $\overline{\nabla}_{m-1}$,  $\|G\|_{\infty} :=\sup_{\bf{x}\in \overline{\nabla}_{m-1}} |G(\bf{x})| < \infty$ and thus
\bea\lbl{eq:S2}
|\mathcal{S}_2| \le \|G\|_{\infty}\sum_{(k_1,\ldots,k_{m-1}) \in \{1,\ldots,n\}^{m-1}} \int_{\frac{k_1-1}{n}}^{\frac{k_1}{n}} \cdots \int_{\frac{k_{m-1}-1}{n}}^{\frac{k_{m-1}}{n}} | I_{ \mathcal{A}_n } - I_{ \mathcal{A}}|~d y_1 \dots d y_{m-1}.
\eea
Now we control $| I_{ \mathcal{A}_n } - I_{ \mathcal{A}}|$ provided $\frac{k_i -1}{n} < y_i < \frac{k_i}{n}$ for $1\le i \le m-1$.
By definition,
\bea\lbl{eq:id-An}
I_{ \mathcal{A}_n}= \left\{
     \begin{array}{lr}
       1 ,\ \text{if}~\frac{k_1}{n} > \cdots > \frac{k_{m-1}}{n} > 1- \sum_{i=1}^{m-1} \frac{k_i}{n} > 0\\
       0 ,\ \text{otherwise}
     \end{array}
   \right.
\eea
and
\bea\lbl{eq:id-A}
I_{ \mathcal{A}}= \left\{
     \begin{array}{lr}
       1 ,\ \text{if}~y_1 > \cdots > y_{m-1} > 1- \sum_{i=1}^{m-1} y_i \ge 0\\
       0 ,\ \text{otherwise.}
     \end{array}
   \right.
\eea
Let $\mathcal{B}_n$ be a subset of $\mathcal{A}_n$ such that
\beaa
\mathcal{B}_n= \mathcal{A}_n \cap \Big\{(k_1, \cdots, k_{m-1})\in \{1,\ldots,n\}^{m-1};\, \frac{k_{m-1}}{n} +\sum_{i=1}^{m-1} \frac{k_i}{n} > \frac{m}{n}+1\Big\}.
\eeaa
 Given $(k_1, \cdots, k_{m-1})\in \mathcal{B}_n$, for any
\bea\lbl{red_book}
\frac{k_1 - 1}{n} < y_1 < \frac{k_1}{n}, \cdots, \frac{k_{m-1} - 1}{n} < y_{m-1} < \frac{k_{m-1}}{n},
\eea
it is easy to verify from (\ref{eq:id-A}) and (\ref{eq:id-An}) that $I_{\mathcal{A}}=1$. Hence,
 \bea
 I_{\mathcal{A}_n}&=&I_{\mathcal{B}_n} + I_{\mathcal{A}_n\backslash\mathcal{B}_n} \nonumber\\
& \leq & I_{\mathcal{A}} + I_{\mathcal{A}_n \cap \{ k_{m-1} + \sum_{i=1}^{m-1}k_i \leq  n+m \}} \nonumber\\
& = & I_{\mathcal{A}} + \sum_{j=n+1}^{n+m}I_{E_j}\lbl{spring_cold}
 \eea
where
\beaa
E_j&=&\Big\{(k_1, \cdots, k_{m-1})\in \{1,\ldots,n\}^{m-1};\, k_1>\ldots>k_{m-1}\ge 1, \\
& & \quad \quad \quad \quad \quad \quad\quad\quad\quad\quad\quad\quad\quad k_{m-1} + \sum_{i=1}^{m-1}k_i=j, \sum_{i=1}^{m-1} k_i <n\Big\}
\eeaa
for $n+1 \leq j \leq m+n$. Let us estimate the size of $|E_j|$. From the last two restrictions, we obtain $k_{m-1} >j-n$. Since $\sum_{i=1}^{m-1} k_i <n$ and $k_i > k_{m-1}$ for $1\le i \le m-2$, we have $j-n+1 \le k_{m-1} \le \frac{n}{m-1}$.

For each fixed $k_{m-1}$, since $k_1> \ldots >k_{m-2}$ is the ordered positive integer solution to the linear equation $\sum_{i=1}^{m-2}k_i = j - 2k_{m-1}$, thus
\beaa
|E_{j}| \le \sum_{j-n+1 \le l \le \frac{n}{m-1}}^{} \frac{\binom{j-2l-1}{m-3}}{(m-2)!} \le \left(\frac{n}{m-1}+n-j \right)\frac{\binom{2n-j-3}{m-3}}{(m-2)!}.
\eeaa
As a result, we obtain the crude upper bound
\bea\lbl{eq:E}
\sum_{j=n+1}^{n+m}|E_j| \le \sum_{j=n+1}^{n+m} \left(\frac{n}{m-1}+n-j \right)\frac{\binom{2n-j-3}{m-3}}{(m-2)!}\le  \frac{m\cdot n^{m-2}}{(m-1)!(m-3)!}.
\eea
On the other hand, consider a subset of $\mathcal{A}_n^c:=\{\frac{1}{n},\frac{2}{n}, \cdots, 1\}^{m-1}\backslash \mathcal{A}_n$ defined by
\beaa
\mathcal{C}_n &=&
\frac{1}{n} \Big\{(k_1, \cdots, k_{m-1})\in \{1,\ldots,n\}^{m-1};\, \mbox{either}\  k_i\leq k_{i+1}-1\ \mbox{for some }
  1\leq i \leq m-2,\\
& & \ \mbox{or}\
 k_1+\cdots + k_{m-2} + 2k_{m-1} \leq   n,\ \mbox{or}\ k_1+\cdots  + k_{m-1} \geq  m+n-1 \Big\}.
\eeaa
Set $\mathcal{A}^c=[0,1]^{m-1}\backslash \mathcal{A}$. Given $(\frac{k_1}{n}, \cdots, \frac{k_{m-1}}{n})\in \mathcal{C}_n$, for any $k_i$'s and $y_i$'s satisfying (\ref{red_book}), it is not difficult to check that $I_{\mathcal{A}^c}=1$. Consequently,
\beaa
I_{\mathcal{A}_n^c} &= & I_{\mathcal{C}_n} + I\Big\{(\frac{k_1}{n}, \cdots, \frac{k_{m-1}}{n})\in \mathcal{A}_n^c;\, k_i> k_{i+1}-1\ \mbox{for all }\ 1\leq i \leq m-2,\\
& & ~~~~~~~~~~~~ k_1+\cdots + k_{m-2} + 2k_{m-1} >  n,\ \mbox{and}\ k_1+\cdots  + k_{m-1} <  m+n-1 \Big\}\\
& \leq & I_{\mathcal{A}^c} + I(\mathcal{D}_{n,m,1}) + I(\mathcal{D}_{n,m,2}),
\eeaa
or equivalently,
\bea\lbl{coca_warm}
I_{\mathcal{A}_n}
 \geq  I_{\mathcal{A}} - I(\mathcal{D}_{n,m,1}) -  I(\mathcal{D}_{n,m,2}),
\eea
where
\beaa
& &\mathcal{D}_{n,m,1}=\bigcup_{l=n}^{ n+m-2}\frac{1}{n}\Big\{(k_1, \cdots, k_{m-1})\in \{1,\ldots,n\}^{m-1};\, \sum_{i=1}^{m-1}k_i=l, k_1 \ge \ldots \ge k_{m-1} \Big\};\\
& & \mathcal{D}_{n,m,2}=\bigcup_{l=1}^{m-2}\frac{1}{n}\Big\{(k_1, \cdots, k_{m-1})\in \{1,\ldots,n\}^{m-1};\, k_l=k_{l+1}, k_1 \ge \ldots \ge k_{m-1},\\
& & \quad\quad\quad\quad\quad\quad\quad\quad\quad\quad \sum_{i=1}^{m-1}k_i + k_{m-1} \ge n+1, \sum_{i=1}^{m-1} k_i \le n+m-2\Big\}.\\
\eeaa
By the definition of partitions and \eqref{eq:size}, we have the following bound on $|\mathcal{D}_{n,m,1}|$.
\bea\lbl{eq:D1}
|\mathcal{D}_{n,m,1}| &\le& \sum_{l=n}^{n+m-2}|\ml{P}_l(m-1)| \sim \sum_{l=n}^{n+m-2} \frac{\binom{l-1}{m-2}}{(m-1)!}\nonumber\\
&\le & (m-1)\frac{\binom{n+m-2}{m-2}}{(m-1)!} \le \frac{(n+m-2)^{m-2}}{[(m-2)!]^2}
\eea
as $n\to \infty$.

The estimation of $|\mathcal{D}_{n,m,2}|$ is the same argument as in (\ref{eq:E}). For the cases $m=3$ or $m=4$, it is easy to verify that $|\mathcal{D}_{n,m,2}|=O(n^{m-2})$. Now we assume $m\ge 5$. First, from the decreasing order of $k_i$ and $\sum_{i=1}^{m-1}k_i \le n+m-2$,
we determine the range of $k_{m-1}$, $$1\le k_{m-1} \le  \frac{n+m-2}{m-1}.$$ On the other hand, $n+1-2k_{m-1} \le \sum_{i=1}^{m-2}k_i \le n+m-2-k_{m-1}$. If $l\neq m-2$, from the restriction $k_l = k_{l+1}$, we see $k_1 + \ldots+k_{l-1} + k_{l+2}+\ldots +k_{m-2} = s-2k_l$ is
the ordered positive integer solutions to the equation $j_1+\ldots+j_{m-4}=s-2k_l$, where $n+1-2k_{m-1} \le s \le n+m-2-k_{m-1}$. If $l=m-2$, then $k_1 + \cdots +k_{m-3} = s-2k_{m-1}$ and $n+1-3k_{m-1} \le s-2k_{m-1} \le n+m-2-2k_{m-1}$. Therefore, we have the following crude upper bound
\bea\lbl{eq:D2}
|\mathcal{D}_{n,m,2}| &\le& \sum_{l=1}^{m-3}\sum_{k_{m-1}=1}^{\frac{n+m-2}{m-1}} \sum_{s=n+1-2k_{m-1}}^{n+m-2-k_{m-1}}\sum_{k_{m-1}\le k_l \le s/2} \frac{\binom{s-2k_l-1}{m-5}}{(m-4)!}\nonumber\\
&& \quad\quad\quad \quad + \sum_{k_{m-1}=1}^{\frac{n+m-2}{m-1}} \sum_{s=n+1-3k_{m-1}}^{n+m-2-2k_{m-1}} \frac{\binom{s-k_{m-1}-1}{m-4}}{(m-3)!}\nonumber\\
&=& O\left( \frac{n^3(m-3)}{m^2(m-4)!} \binom{n+m-6}{m-5} + \frac{n^2}{m^2(m-3)!} \binom{n+m-6}{m-4}\right)\nonumber\\
&=& O\left( \frac{n^2 (n+m)^{m-4} }{m (m-4)!(m-5)!}\right).
\eea
Joining (\ref{spring_cold}) and (\ref{coca_warm}), and assuming (\ref{red_book}) holds, we arrive at
\beaa
|I_{\mathcal{A}_n}-  I_{\mathcal{A}}|\leq  I(\mathcal{D}_{n,m,1})+ I(\mathcal{D}_{n,m,2})  + \sum_{i=n+1}^{n+m}I_{E_i}.
\eeaa
Observe that $\mathcal{D}_{n,m,i}$'s and $E_i$'s do not depend on $y_i$'s, we obtain from (\ref{eq:S2}) that
\beaa
 |\mathcal{S}_2| &\leq & \|G\|_{\infty}\sum_{k_1 = 1}^n \cdots \sum_{k_{m-1}=1}^n \Big[\sum_{i=1}^2I(\mathcal{D}_{n,m,i}) + \sum_{i=n}^{n+m}I_{E_i}\Big]\int_{\frac{k_1-1}{n}}^{\frac{k_1}{n}} \cdots \int_{\frac{k_{m-1}-1}{n}}^{\frac{k_{m-1}}{n}}1 ~d y_1 \dots d y_{m-1}\\
& = &  \|G\|_{\infty} \Big(\sum_{i=1}^2|\mathcal{D}_{n,m,i}| + \sum_{i=n}^{n+m}|E_i|\Big)\cdot \frac{1}{n^{m-1}}.
\eeaa
For $2\le m \le 4$,
$$|\mathcal{S}_2| = O(n^{-1}).$$
For $m\ge 5$, by \eqref{eq:E},\eqref{eq:D1} and \eqref{eq:D2},
\beaa
|\mathcal{S}_2|&=&O\left( \frac{m\cdot n^{m-2}}{(m-1)!(m-3)!} + \frac{(n+m)^{m-2}}{[(m-2)!]^2}+\frac{n^2 (n+m)^{m-4} }{m (m-4)!(m-5)!} \right)\cdot\frac{1}{n^{m-1}}\\
&=& O\left( \frac{(1+\frac{m}{n})^m}{n} \right)
\eeaa
as $n\to\infty.$

{\it Step 5: Difference between the expectations \eqref{eq:part} and \eqref{eq:gen}}. For any $\varepsilon>0$, from {\it Step 3} and {\it Step 4}, we obtain the difference between the numberators in \eqref{eq:part} and \eqref{eq:gen}
\begin{align}\label{eq:diff}
|\mathcal{I}_1 - \mathcal{I}_2| \le |\mathcal{S}_1| + |\mathcal{S}_2| \le \varepsilon + O\left( \frac{(1+\frac{m}{n})^m}{n} \right) < 2 \varepsilon
\end{align}
for $n$ sufficiently large.
Choosing $\psi$ to be identity on $\overline{\nabla}_{m-1}$, we obtain the difference between the denominators in \eqref{eq:part} and \eqref{eq:gen} as follows:
\begin{align*}
\left| n^{-(m-1)} \sum_{(k_1,\ldots,k_m) \in \ml{P}_n(m)} f\Big(\frac{k_1}{n},\ldots,\frac{k_m}{n}\Big) - \int_{\nabla_{m-1}} f(y_1,\ldots,y_m) \,dy_1\ldots dy_{m-1} \right|< 2 \varepsilon
\end{align*}
for $n$ sufficiently large.

Finally, we estimate the expectations \eqref{eq:part} and \eqref{eq:gen}. Since $m$ is fixed, by \eqref{eq:error}, \eqref{eq:diff}, and the triangle inequality,
\begin{align}\label{eq:compare}
\Big|\E\left(\psi\Big(\frac{k_1}{n},\ldots,\frac{k_m}{n}\Big) \right) - \E\left(\psi(x_1,\ldots,x_m) \right)\Big| \to 0
\end{align}
as $n\to \infty$. This completes the proof.
\end{proof}

\begin{proof}[Proof of Corollary \ref{cor:dir}]
By Theorem \ref{thm:general},
$$\Big(\frac{k_1}{n},\ldots, \frac{k_m}{n}\Big) \to (x_1, \ldots, x_m)\sim \mu$$
as $n \to \infty$, where $\mu$ has pdf
\bea\lbl{eq:result}
g(y_1,\ldots, y_m) =\frac{ y_1^{\alpha-1} \cdots y_m^{\alpha-1}}{\int_{\nabla_{m-1}} y_1^{\alpha-1} \cdots y_m^{\alpha-1} \,dy_1\ldots dy_{m-1}}.
\eea
It suffices to show the order statistics $(X_{(1)}, \ldots, X_{(m)})$ of $(X_1,\ldots,X_m) \sim \text{Dir}(\alpha)$ has the same pdf on $\nabla_{m-1}$. For any continuous function $\psi$ defined on $\nabla_{m-1}$, by symmetry,
\beaa
&&\E\psi(X_{(1)}, \ldots, X_{(m)}) \\
&=& \int_{W_{m-1}} \psi(y_{(1)}, \ldots, y_{(m)})\mathbf{1}_{\{y_{(1)}\ge \ldots \ge y_{(m)} \}}\frac{\Gamma(m\alpha)}{\Gamma(\alpha)^m} y_1^{\alpha-1} \cdots y_m^{\alpha-1}\,dy_1\ldots dy_{m-1}\\
&=& \int_{W_{m-1}} \sum_{\sigma\in \mathcal{S}_m} \psi(y_{\sigma(1)}, \ldots, y_{\sigma(m)})\mathbf{1}_{\{y_{\sigma(1)}\ge \ldots \ge y_{\sigma(m)} \}} \frac{\Gamma(m\alpha)}{\Gamma(\alpha)^m} y_{\sigma(1)}^{\alpha-1} \cdots y_{\sigma(m)}^{\alpha-1}\,dy_1\ldots dy_{m-1}\\
&=& \int_{\nabla_{m-1}} \psi(y_1,\ldots,y_m)  \frac{m!\Gamma(m\alpha)}{\Gamma(\alpha)^m} y_{1}^{\alpha-1} \cdots y_{m}^{\alpha-1}\,dy_1\ldots dy_{m-1}.
\eeaa

Therefore, the pdf of $(X_{(1)}, \ldots, X_{(m)})$ is
\bea\lbl{eq:ordered}
\frac{m!\Gamma(m\alpha)}{\Gamma(\alpha)^m} y_{1}^{\alpha-1} \cdots y_{m}^{\alpha-1}
\eea
on the set $\nabla_{m-1}$. Similarly, by the definition of pdf we have $$\int_{W_{m-1}}\frac{\Gamma(m\alpha)}{\Gamma(\alpha)^m} x_1^{\alpha-1} \cdots x_m^{\alpha-1}=1.$$ By symmetry, we obtain
$$\int_{\nabla_{m-1}} y_1^{\alpha-1} \cdots y_m^{\alpha-1} \,dy_1\ldots dy_{m-1}=\frac{\Gamma(\alpha)^m}{m!\Gamma(m\alpha)}.$$
Comparing the above with \eqref{eq:ordered} and \eqref{eq:result}, we complete the proof.
\end{proof}

\begin{proof}[Proof of Corollary \ref{cor:sphere}]
By Theorem \ref{thm:general} or Corollary \ref{cor:dir},
$$\Big(\frac{k_1}{n},\ldots, \frac{k_m}{n}\Big) \to (\widetilde Y_1, \ldots, \widetilde Y_m)\sim \mu$$
as $n \to \infty$, where $\mu$ has pdf
$$\frac{m!\cdot \Gamma(\frac{m}{\alpha})}{\Gamma(\frac{1}{\alpha})^{m}}(y_1\ldots y_{m})^{\alpha-1}
$$
on $\nabla_{m-1}$ and zero elsewhere. Since $f(x)=x^{\alpha}$ is continuous,
$$\left( \Big(\frac{k_1}{n}\Big)^{\alpha},\ldots, \Big(\frac{k_m}{n}\Big)^{\alpha}\right) \to \left( \widetilde Y_1^{\alpha}, \ldots, \widetilde Y_m^{\alpha} \right)$$
as $n \to \infty$.

Now it suffices to show $\left( \widetilde Y_1^{\alpha}, \ldots, \widetilde Y_m^{\alpha} \right)$ has the uniform distribution on the set
$$\mathcal{U}_{m-1}=\Big\{(x_1,\ldots,x_m)\in [0,1]^m;\sum_{i=1}^m x_i^{\frac{1}{\alpha}}=1, x_1\ge \ldots \ge x_m \Big\}.$$
This can be seen by change of variables. For any continuous function $\psi$ defined on $\nabla_{m-1}$,
\beaa
&&\E\psi(\widetilde Y_1^{\alpha}, \ldots,  \widetilde Y_m^{\alpha}) \\
&=& \int_{\nabla_{m-1}} \psi(y_1^{\alpha}, \ldots, y_m^{\alpha})\frac{m!\cdot \Gamma(\frac{m}{\alpha})}{\Gamma(\frac{1}{\alpha})^{m}} y_1^{\alpha-1} \cdots y_m^{\alpha-1}\,dy_1\ldots dy_{m-1}\\
&=& \int_{\mathcal{U}_{m-1}} \psi(x_1, \ldots, x_m)\frac{m!\cdot \Gamma(\frac{m}{\alpha})}{\alpha^{m-1}\Gamma(\frac{1}{\alpha})^{m}} \,dx_1\ldots dx_{m-1}.
\eeaa
In the last equality, we set $x_i=y_i^{\alpha}$ for $1\le i \le m$. Therefore, we can see the pdf of $(\widetilde Y_1^{\alpha}, \ldots,\widetilde Y_m^{\alpha})$ is a constant on $\mathcal{U}_{m-1}$,
which is the uniform distribution on $\mathcal{U}_{m-1}.$ The proof is complete.
\end{proof}

\subsection{The Proof of Theorem \ref{thm:general-infinity}}\lbl{sec:generalinf}
In Section \ref{sec:generalfix} we have studied the asymptotic distribution of $(\frac{k_1}{n}, \cdots, \frac{k_m}{n})$ as $m$ is fixed. Now we consider the case that $m$ depends on $n$. Note that the formula \eqref{eq:size} holds as long as $m=o(n^{1/3})$.

Let $\mu$ and $\nu$ be two Borel
probability measures on a Polish space $S$ with the Borel $\sigma$-algebra $\mathcal{B}(S)$.  Define
\begin{eqnarray}\lbl{cream}
  \rho(\mu, \nu)
  & = & \sup_{\| \varphi \|_L\leq 1}\left|\int_{S} \varphi(x)\, \mu(dx) -
  \int_{S}\varphi(x)\, \nu(dx)\right|,
\end{eqnarray}
where $\varphi$ is a bounded Lipschitz function defined on
$S$ with $\|\varphi\|=\sup_{x\in S}|\varphi(x)|,$ and
$\|\varphi\|_L=\|\varphi\|+\sup_{x\ne y}|\varphi(x)-\varphi(y)|/|x-y|.$ It is known that $\mu_n$ converges to $\mu$ weakly  if and only if $\lim_{n\to\infty}\int \varphi(x)\, \mu_n(dx) = \int \varphi(x)\, \mu(dx)$  for every bounded and Lipschitz continuous function $\varphi(x)$ defined on $\mathbb{R}^m$, and if and
only if $\lim_{n\to\infty}\rho(\mu_n, \mu)=0$; see, e.g., Chapter 11 from \citet{Dudley}.

Let $\{X_i, X_{n,i};\, n\geq 1,\, i\geq 1\}$ be random variables taking values in $[0,1]$. Set $X_n=(X_{n1}, X_{n2},\cdots)\in [0,1]^{\infty}$. If $X_{ni}=0$ for $i>m$, we simply write $X_n=(X_{n1},\cdots, X_{nm})$. We say that $X_n$ \emph{converges weakly} to $X:=(X_1, X_2, \cdots)$ as $n\to\infty$ if, for any $r\geq 1$, $(X_{n1},\cdots, X_{nr})$ converges weakly to $X=(X_1, \cdots, X_r)$ as $n\to\infty$. This convergence actually is the same as the weak convergence of random variables in $([0,1]^{\infty}, d)$ where
\bea\lbl{Pepsi}
d(x,y)=\sum_{i=1}^{\infty}\frac{|x_i-y_i|}{2^i}
\eea
for $x=(x_1, x_2, \cdots)\in [0, 1]^{\infty}$ and $y=(y_1, y_2, \cdots)\in [0, 1]^{\infty}$. The topology generated by this metric is the same as  the product topology.

\begin{lemma}\label{lem:infinity}
 Let $m=m_n\to\infty$ as $n\to\infty.$ Let $\kappa=(k_1,\ldots, k_m) \in \ml{P}_n(m)$ be chosen with probability as in \eqref{eq:general} under the assumption of Theorem \ref{thm:general-infinity}. Let $(X_{m,1}, \cdots, X_{m,m})$ and $X=(X_1, X_2, \cdots)$ be random variables taking values in $\nabla_{m-1}$ and  $\nabla$, respectively. If
\bea\lbl{drink}
\sup_{\|\varphi\|_L\leq 1}\Big|E\varphi\Big(\frac{k_1}{n}, \cdots, \frac{k_m}{n}\Big) -
  E\varphi(X_{m,1}, \cdots, X_{m,m})\Big| \to 0
\eea
as $n\to\infty$, and $(X_{m,1}, \cdots, X_{m,m})$ converges weakly to $X$ as $n\to\infty$, then $\big(\frac{k_1}{n}, \cdots, \frac{k_m}{n}\big)$ converges weakly to $X$ as $n\to\infty$.
\end{lemma}
\begin{proof} Given integer $r\geq 1$, to prove the theorem, it is enough to show $\big(\frac{k_1}{n}, \cdots, \frac{k_r}{n}\big)$ converges weakly to $(X_1, \cdots, X_r)$ as $n\to\infty.$ Since $m=m_n\to\infty$ as $n\to\infty$, without loss of generality, we assume $r<m$ in the rest of discussion. For any random vector $Z$, let $\ml{L}(Z)$ denote  its  probability distribution. Review (\ref{cream}). By the triangle inequality,
\bea
& & \rho\Big(\mathcal{L}\big(\frac{k_1}{n}, \cdots, \frac{k_r}{n}\big),\, \mathcal{L}\big(X_1, \cdots, X_r\big)\Big)\nonumber\\
& \leq & \rho\Big(\mathcal{L}\big(\frac{k_1}{n}, \cdots, \frac{k_r}{n}\big),\, \mathcal{L}\big(X_{m,1}, \cdots, X_{m,r}\big)\Big) + \rho\Big(\mathcal{L}\big(X_{m,1}, \cdots, X_{m,r}\big),\, \mathcal{L}\big(X_1, \cdots, X_r\big)\Big) \nonumber\\
& & ~~~~~~~~~~~~~~~~~~~~~~~~~~~~~~~~~~~~~~~~~~~~~~~~~~~~~~~~~~~~~~~~~~~~~~~\lbl{tang}
\eea
For any function $\varphi(x_1, \cdots, x_r)$ defined on $[0, 1]^r$ with $\|\varphi\|_{L}\leq 1$, set $\tilde{\varphi}(x_1, \cdots, x_m)=\varphi(x_1, \cdots, x_r)$ for all $(x_1, \cdots, x_m)\in\mathbb{R}^m$. Then $\|\tilde{\varphi}\|_{L}\leq 1$. Condition (\ref{drink}) implies that the middle one among the three distances in (\ref{tang}) goes to zero. Further, the assumption that  $(X_{m,1}, \cdots, X_{m,m})$ converges weakly to $X$ implies the third distance in (\ref{tang}) also goes to zero. Hence the first distance goes to zero. The proof is completed.
\end{proof}

With Lemma \ref{lem:infinity} and the estimation in Theorem \ref{thm:general}, we obtain the proof of Theorem \ref{thm:general-infinity}.
\begin{proof}[Proof of Theorem \ref{thm:general-infinity}]
Assume $\kappa=(k_1,\ldots, k_m) \in \ml{P}_n(m)$ is chosen with probability as in \eqref{eq:general}. The proof is almost identical to the proof of Theorem \ref{thm:general}. We only mention the difference and modifications. Instead of choosing the test function $\psi$ to be bounded and continuous as in the beginning of Theorem \ref{thm:general}, we select $\psi=\varphi$ to be bounded and Lipschitz. Following the proof of Theorem \ref{thm:general}, the function $G$ defined in \eqref{def:G} in \emph{Step 2} is now bounded and Lipschitz on $\overline{\nabla}_{m-1}$. The major change happens in \emph{Step 3}, where we replace the estimation in \eqref{eq:bdG} by
\begin{align*}
 \Big|G\Big(\frac{k_1}{n},\ldots,\frac{k_{m-1}}{n}\Big) - G(y_1,\ldots,y_{m-1})\Big|
 &\le C \cdot \sqrt{\sum_{i=1}^{m-1}\Big(y_i - \frac{k_i}{n}\Big)^2}\\
 &\le C \cdot \frac{\sqrt{m}}{n},
\end{align*}
for some constant $C$ depending only on the Lipschitz constant of $G$, where $y_i \in [\frac{k_i-1}{n}, \frac{k_i}{n}]$ for $1\le i \le m-1$. Consequently, the term $\mathcal{S}_1$ defined in the end of \emph{Step 2} is now bounded as follows:
\bea\lbl{eq:bdS1}
|\mathcal{S}_1| &\le& \sum_{(k_1,\ldots,k_{m-1}) \in \{1,\ldots,n\}^{m-1}} \int_{\frac{k_1-1}{n}}^{\frac{k_1}{n}} \cdots \int_{\frac{k_{m-1}-1}{n}}^{\frac{k_{m-1}}{n}} \nonumber\\
& & \quad \quad \quad \quad \quad \quad  \quad  \Big|G\Big(\frac{k_1}{n},\ldots,\frac{k_{m-1}}{n}\Big) - G(y_1,\ldots,y_{m-1})\Big|  ~d y_1 \dots d y_{m-1} \nonumber\\
&\le& C \cdot \frac{\sqrt{m}}{n} \Big(\frac{1}{n}\Big)^{m-1} n^{m-1} = \frac{C\sqrt{m}}{n}.
\eea
\emph{Step 4} remains the same and we modify \emph{Step 5} using the changes mentioned above. The difference between  the numberators in \eqref{eq:part} and \eqref{eq:gen} now becomes
\bea\lbl{eq:diff1}
|\mathcal{I}_1 - \mathcal{I}_2| \le |\mathcal{S}_1| + |\mathcal{S}_2| \le C_1\cdot \left(\frac{\sqrt{m}}{n} + \frac{(1+\frac{m}{n})^m}{n} \right)
\eea
as $n\to \infty$ for some constant $C_1$ depending only on the Lipschitz constants of $\varphi$ and $f$ and the upper bounds of $\varphi$ and $f$ on the compact set $\overline{\nabla}_{m-1}$. Using the same argument in the end of the proof of Theorem \ref{thm:general} and the assumption that $\|f\|_{Lip}\le K$, we have for any $\varphi$ defined on $\nabla_{m-1}$ satisfying $\|\varphi\|_L \le 1$,
\beaa
& & \sup_{\|\varphi\|_L \le 1}|\E\left(\varphi(\frac{k_1}{n},\ldots,\frac{k_m}{n}) \right) - \E\left(\varphi(X_{m,1},\ldots,X_{m,m}) \right)|\\
&=& O\Big(\frac{\sqrt{m}}{n} + \frac{(1+\frac{m}{n})^m}{n}\Big) + |\mathcal{E}_{n,m}| \to 0.
\eeaa
as $n\to \infty$. Recall in \eqref{eq:error}, we have $|\mathcal{E}_{n,m}| \to 0$ as long as $m=o(n^{1/3})$. Therefore, by Lemma \ref{lem:infinity}, we conclude that $(\frac{k_1}{n},\cdots, \frac{k_m}{n})$ converges weakly to $X$ as $n\to \infty$.
\end{proof}

\bibliography{PD}
\bibliographystyle{apalike}


\end{document}